\numberwithin{equation}{section} 
\newtheorem{thm}{Theorem}[section]
\newtheorem{prop}[thm]{Proposition}
\newtheorem{lem}[thm]{Lemma}
\theoremstyle{definition}
\newtheorem{defi}[thm]{Definition}
\theoremstyle{remark}
\newtheorem{rmk}[thm]{Remark}
\theoremstyle{plain}
\DeclareMathOperator{\Mat}{Mat}
\DeclareMathOperator{\Ext}{Ext}
\DeclareMathOperator{\tr}{tr}
\DeclareMathOperator{\tor}{tor}
\title[A $t$-motivic interpretation of shuffle relations]{A $t$-motivic interpretation of shuffle relations for multizeta values}
\author{Wei-Cheng Huang}
\date{}
\subjclass[2010]{11J91, 11J93}
\keywords{multizeta values, shuffle relations, $t$-modules}
\thanks{The author was partially supported by Prof. C.-Y. Chang's MOST Grant 102-2115-M-007-013-MY5}
\address{Department of Mathematics, Texas A\&M University, College Station, TX 77843, U.S.A.}
\email{wchuang@math.tamu.edu}
\begin{document}
\begin{abstract}
Thakur \cite{Thakurshuffle} showed that, for $r,$ $s\in \mathbb{N}$, a product of two Carlitz zeta values $\zeta_A(r)$ and $\zeta_A(s)$ can be expressed as an $\mathbb{F}_p$-linear combination of $\zeta_A(r+s)$ and double zeta values of weight $r+s$. Such an expression is called shuffle relation by Thakur. Fixing $r,$ $s\in \mathbb{N}$, we construct a $t$-module $E'$. To determine whether an $(r+s)$-tuple $\mathfrak{C}$ in $\mathbb{F}_q(\theta)^{r+s}$ gives a shuffle relation, we relate it to the $\mathbb{F}_q[t]$-torsion property of the point $\mathbf{v}_\mathfrak{C}\in E'(\mathbb{F}_q[\theta])$ constructed with respect to the given $(r+s)$-tuple $\mathfrak{C}$. We also provide an effective criterion for deciding the $\mathbb{F}_q[t]$-torsion property of the point $\mathbf{v}_\mathfrak{C}$.
\end{abstract}

\maketitle

\section{Introduction}
Let $A:=\mathbb{F}_q[\theta]$ be the polynomial ring in the variable $\theta$ over the finite field of $q$ elements $\mathbb{F}_q$ where $q$ is a power of a prime $p$. We denote by $A_+$ the set of monic polynomials in $A$ and let $k:=\mathbb{F}_q(\theta)$ be the field of fractions of $A$. For $(s_1,\dots,s_r)\in \mathbb{N}^r$, Thakur \cite{Thakurfunctionfield} introduced the multizeta value $\zeta_A(s_1,\dots,s_r)$ defined by \[\zeta_A(s_1,\dots,s_r):=\sum_{\substack{(a_1,\dots,a_r)\in A^r_+\\ \deg a_1>\cdots>\deg a_r}}\frac{1}{a_1^{s_1}\cdots a_r^{s_r}}\in \mathbb{F}_q(\!( 1/\theta)\!).\] Here $\sum_{i=1}^r s_i$ is called the \textit{weight} and $r$ is called the \textit{depth} of the presentation $\zeta_A(s_1,\dots,s_r).$ In particular, depth one multizeta values are called Carlitz zeta values initiated by Carlitz \cite{Car35} and depth two multizeta values are called double zeta values. In  \cite{Thakur09}, Thakur showed that each multizeta value is non-vanishing.

We fix positive integers $r,s\in\mathbb{N}$ and let $n:=r+s.$ By \cite{Thakurshuffle}, we know that the product of two zeta values $\zeta_A(r)$ and $\zeta_A(s)$ can be expressed as an $\mathbb{F}_p$-linear combination of $\zeta_A(n)$ and double zeta values of weight $n$. Such an expression is called shuffle relation by Thakur. Chen \cite{Chen2015153} derived an explicit formula of a shuffle relation, which is given by
\begin{align}\begin{split}\label{chen}
\zeta_A(r)\zeta_A(s)-&\zeta_A(r,s)-\zeta_A(s,r)=\\
&\zeta_A(n)+\sum_{\substack{i+j=n\\ (q-1)\mid j}}\left[ (-1)^{s-1}\begin{pmatrix}j-1\\s-1\end{pmatrix}+(-1)^{r-1}\begin{pmatrix}j-1\\r-1\end{pmatrix}\right]\zeta_A(i,j).
\end{split}\end{align} We want to study shuffle relations with coefficients in $k$. By a shuffle relation for multizeta values over $k$, we mean that the following identity holds: 
\begin{equation}\label{shueq}
\zeta_A(r)\zeta_A(s)-\zeta_A(r,s)-\zeta_A(s,r)=b_0\zeta_A(n)+\sum_{i=1}^{n-1}a_i\zeta_A(i,n-i)
\end{equation}
for some $a_i$, $b_0\in k$. We are interested in $n$-tuples of coefficients $\mathfrak{C}=(b_0,a_1,\dots,a_{n-1})\in k^n$ satisfying the equation \eqref{shueq}.

In this paper we provide a $t$-motivic interpretation of shuffle relations for multizeta values over $k$. Let $r$, $s$, $n$ be given as above. We construct Frobenius modules (see \S 2.2) $M'$ and $M_\mathfrak{C}$ associated with the given $n$-tuple of coefficients $\mathfrak{C}=(b_0,a_1,\dots,a_{n-1})\in k^n$, which fits into the short exact sequence of Frobenius modules\[ 0\rightarrow M'\rightarrow M_\mathfrak{C}\rightarrow \mathbf{1}\rightarrow 0\] so $M_\mathfrak{C}$ represents a class in $\Ext_{\mathscr{F}}^1(\mathbf{1},M').$ For more details, we refer readers to \S \ref{Fro} and \S \ref{Ext}. To determine whether the given $n$-tuple of coefficients $\mathfrak{C}=(b_0,a_1,\dots,a_{n-1})\in k^n$ satisfy a shuffle relation over $k$ as in the equation \eqref{shueq}, we relate it to whether $M_\mathfrak{C}$ is an $\mathbb{F}_q[t]$-torsion class in $\Ext_{\mathscr{F}}^1(\mathbf{1},M')$. More precisely, if the $n$-tuple of coefficients $\mathfrak{C}\in k^n$ satisfy the equation \eqref{shueq}, then we show that the Frobenius module $M_\mathfrak{C}$ represents an $\mathbb{F}_q[t]$-torsion class in $\Ext_{\mathscr{F}}^1(\mathbf{1},M')$. Conversely, if the Frobenius module $M_\mathfrak{C}$ represents an $\mathbb{F}_q[t]$-torsion class in $\Ext_{\mathscr{F}}^1(\mathbf{1},M')$, then the $n$-tuple of coefficients $\mathfrak{C}\in k^n$ satisfy the equation \eqref{shueq} in the case $(q-1)\nmid n$. In the case $(q-1)\mid n$, the $n$-tuple of coefficients $\mathfrak{C}\in k^n$ satisfy the equation \eqref{shueq} modulo $\tilde{\pi}^{n}$. We state the above results in the Theorem \ref{main}.

Following the strategy in \cite{2014arXiv1411.0124C}, we give an effective criterion for the $\mathbb{F}_q[t]$-torsion property of $M_\mathfrak{C}$. With the $t$-motivic interpretation of shuffle relations for multizeta values over $k$ given in the Theorem \ref{main}, we can effectively determine whether the given $n$-tuple of coefficients $\mathfrak{C}=(b_0,a_1,\dots,a_{n-1})\in k^n$ satisfy a shuffle relation over $k$ (resp. shuffle relation over $k$ modulo $\tilde{\pi}^n$) in the case $(q-1)\nmid n$ (resp. $(q-1)\mid n$). We also provide some examples in \S \ref{Algorithm}.

After we worked out this project, we found that there is another approach to determine a given $n$-tuple $\mathfrak{C}=(b_0,a_1,\dots,a_{n-1})\in k^n$ satisfying a shuffle relation over $k$ by using the results provided by Chang \cite{2015arXiv151006519C}. We fix $r,s\in\mathbb{N}$ and put $n:=r+s$. By combining a shuffle relation over $k$ and the relation (\ref{chen}), we have a relation of the form 
\begin{equation}\label{dou}
\widetilde{b}_0\zeta_A(n)+\sum_{i=1}^{n-1}\widetilde{a}_i\zeta_A(i,n-i)=0
\end{equation} where $\widetilde{a}_i$, $\widetilde{b}_0\in k$. So we have a one-to-one correspondence between shuffle relations over $k$ and relations over $k$ of the form (\ref{dou}). By \cite[Thm. 5.1.1, Thm. 6.1.1]{2015arXiv151006519C}, we have an effective process to check whether a given sequence $\widetilde{a}_i$, $\widetilde{b}_0\in k$ satisfies (\ref{dou}) in the case $(q-1)\nmid n$. In the case $(q-1)\mid n$, this effective process can check whether a given sequence $\widetilde{a}_i$, $\widetilde{b}_0\in k$ satisfies (\ref{dou}) modulo $\tilde{\pi}^n$. Hence we achieve the same result from this approach. For more details, we refer readers to \S \ref{Another}.

Comparing with the results provided in this paper, let us consider the analogue question in the classical case. For fixed positive integer $d$ and $d$-tuple of positive integer variable $(s_1,\dots,s_d)$ with $s_1>1$, \textit{the classical multiple zeta value} is defined by \[\zeta(s_1,\dots,s_d):=\sum_{k_1>\cdots>k_d>0}k_1^{-s_1}\cdots k_d^{-s_d}\text{ (see \cite{Z_MZV})}.\] We fix positive integers $r$, $s>1$ and let $n:=r+s$. There are two well-known formulas. One is shuffle product, also known as Euler's decomposition formula:

\[\zeta(r)\zeta(s)=\sum_{\substack{i\geq 2, j\geq 1\\ i+j=r+s}}\left[ \begin{pmatrix}i-1\\r-1\end{pmatrix}+\begin{pmatrix}j-1\\s-1\end{pmatrix}\right]\zeta(i,j),
\] and the other is stuffle product:
\[\zeta(r)\zeta(s)=\left(\sum_{n_1=n_2}+\sum_{n_1>n_2}+\sum_{n_1<n_2}\right)\frac{1}{n_1^rn_2^s}=\zeta(r+s)+\zeta(r,s)+\zeta(s,r).\] Note that we used to think the field of rational functions $k$ as an analogue of the field of relational numbers $\mathbb{Q}$. Studying $n$-tuples of coefficients $\mathfrak{C}=(b_0,a_1,\dots,a_{n-1})\in \mathbb{Q}^n$ satisfying the following equation: \begin{equation}\label{cshueq}
\zeta(r)\zeta(s)-\zeta(r,s)-\zeta(s,r)=b_0\zeta(n)+\sum_{i=2}^{n-1}a_i\zeta(i,n-i)
\end{equation} appeals to us, and we wonder if there is any criterion for a given $n$-tuple of rational numbers satisfying the equation \eqref{cshueq}. But in this case it is still an unknown problem.

The paper is organized as follows. In \S \ref{PPP}, we set up some essential preliminaries first, and then we state our main theorem, Theorem \ref{main}, which gives a $t$-motivic interpretation of shuffle relations. We prove our main theorem in \S \ref{pf} and provide a necessary condition for a shuffle relation in \S \ref{fur}. We state an effective criterion whether $M_\mathfrak{C}$ is $\mathbb{F}_q[t]$-torsion in $\Ext_{\mathscr{F}}^1(\mathbf{1},M')$ and write down an algorithm in \S \ref{eff}, \S \ref{Algorithm}, respectively. We also provide some examples in \S \ref{Algorithm}. In \S \ref{Another}, we give another approach to our result. The crucial property which makes our criterion effective is the identification of $\Ext_{\mathscr{F}}^1(\mathbf{1},M')$ as a $t$-module defined over $A$ in which $M_\mathfrak{C}$ corresponds to an integral point. However, its proof is essentially the same as \cite{2014arXiv1411.0124C} and so we leave the detailed proof in the appendix.

\renewcommand{\abstractname}{Acknowledgements}
\begin{abstract}
I wish to thank Prof. C.-Y. Chang for helpful advice and Prof. J. Yu for inspiring me to work on this project and enlightening me with constructive opinion. I am grateful to Prof. W. D. Brownawell for carefully reading my preprint and providing me many useful comments on writing. I thank Prof. M. Kaneko for the information about the classical issues of this project. I further thank Prof. M. A. Papanikolas for useful suggestions on Maple programming. I also thank Dr. H.-J. Chen, Dr. Y.-L. Kuan, Prof. D. Thakur and Prof. T.-Y. Wang for their valuable comments. Finally, I thank the referee for several useful suggestions.
\end{abstract}

\section{Preliminaries and The Main Theorems}\label{PPP}

\subsection{Some notations and definitions}
Let $\mathbb{F}_q$ be the finite field with $q$ elements, where $q$ is a power of a prime $p$. Let $\theta$ be a variable and $A:=\mathbb{F}_q[\theta]$, the polynomial ring in $\theta$ over $\mathbb{F}_q$. We denote by $A_+$ the set of monic polynomials in $A$. Let $k:=\mathbb{F}_q(\theta)$, the field of fractions of $A$, and define the absolute value $|\cdot|_\infty$ associated to the infinite place of $k$ so that $|\theta|_\infty =q$. Let $k_\infty$ be the completion of $k$ with respect to $|\cdot|_\infty$. Note that $k_\infty$ is equal to $\mathbb{F}_q(\!( 1/\theta)\!)$, the field of Laurent series in $1/\theta$ over $\mathbb{F}_q$. Let $\overline{k_\infty}$ be a fixed algebraic closure of $k_\infty$. We denote by $\overline{k}$ the algebraic closure of $k$ in $\overline{k_\infty}$, and let $\mathbb{C}_\infty$ be the completion of $\overline{k_\infty}$ with respect to the canonical extension of $|\cdot|_\infty$.

We recall the characteristic $p$ multizeta values defined by Thakur.
\begin{defi}[\cite{Thakurfunctionfield}]
For any $r$-tuple of positive integers $(s_1,\dots,s_r)\in \mathbb{N}^r$, we define \[\zeta_A(s_1,\dots,s_r):=\sum_{\substack{(a_1,\dots,a_r)\in A^r_+\\ \deg a_1>\cdots>\deg a_r}}\frac{1}{a_1^{s_1}\cdots a_r^{s_r}}\in k_\infty.\]
\end{defi}

\begin{rmk}\label{non}
In  \cite{Thakur09}, Thakur showed that each multizeta value is non-vanishing.
\end{rmk}

\subsection{Anderson-Thakur polynomials}

Define $D_0:=1$ and $D_i:=\prod_{j=0}^{i-1}(\theta^{q^i}-\theta^{q^j})$ for $i\in \mathbb{N}.$ For a non-negative integer $n$, we express $n$ as \[n=\sum_{i=0}^\infty n_i q^i \quad (0\leq n_i\leq q-1,~n_i=0~\text{for}~i\gg 0),\] and we recall the \textit{Carlitz factorial}\[\Gamma_{n+1}:=\prod_{i=0}^{\infty}D_i^{n_i}\in A\text{ (see \cite{Thakurfunctionfield})}.\]

We put $G_0(y):=1$ and define polynomials $G_n(y)\in \mathbb{F}_q[t,y]$ for $n\in \mathbb{N}$ by the product \[G_n(y):=\prod_{i=1}^n(t^{q^n}-y^{q^i})\]where $t$ is a new variable independent from $y$. For $n=0,1,2,\dots$, we define the sequence of \textit{Anderson-Thakur polynomials} $H_n \in A[t]$ by the generating function identity \[ \left( 1-\sum_{i=0}^\infty \frac{G_i(\theta)}{D_i|_{\theta=t}}x^{q^{i}}\right) ^{-1}=\sum_{n=0}^\infty \frac{H_n}{\Gamma_{n+1}|_{\theta=t}}x^n \text{ (see \cite{AT90,AT09})}.\]

\subsection{Frobenius twisting and Frobenius modules}\label{Fro}
We consider the \textit{Frobenius twisting} which is an automorphism on $\mathbb{C}_\infty(\!( t)\!)$ defined by \[\mathbb{C}_\infty(\!( t)\!)\rightarrow\mathbb{C}_\infty(\!( t)\!):~f:=\sum_ia_it^i\mapsto f^{(-1)}:=\sum_ia_i^{\frac{1}{q}}t^i.\] Note that the twisting is extended to $\Mat_n(\mathbb{C}_\infty(\!( t)\!))$ by acting entry-wisely.

We define $\bar{k}[t,\sigma]$ to be the non-commutative $\bar{k}[t]$-algebra generated by $\sigma$ with respect to the relation\[\sigma f=f^{(-1)}\sigma,~\forall f\in \bar{k}[t].\]
A left $\bar{k}[t,\sigma]$-module $M$ is called a \textit{Frobenius module} if it is free of finite rank over $\bar{k}[t]$. \textit{Morphisms of Frobenius modules} are left $\bar{k}[t,\sigma]$-module homomorphisms. We let $\mathscr{F}$ be the category of Frobenius modules.

We denote by $\mathbf{1}$ the trivial object in $\mathscr{F}$. Its underlying space is $\bar{k}[t]$ subject to the $\sigma$-action \[\sigma (f):=f^{(-1)},~\forall f\in\mathbf{1}.\]

Let $\Phi\in\Mat_r(\bar{k}[t])$ be given. We say that a Frobenius module $M$ is \textit{defined by the matrix $\Phi\in\Mat_r(\bar{k}[t])$} if the Frobenius module $M$ is of rank $r$ over $\bar{k}[t]$ with  $\bar{k}[t]$-basis $\{f_1,\dots,f_r\}\subset M$ satisfying 
\[\sigma\begin{pmatrix}
 f_1\\ 
 \vdots\\
 f_r
\end{pmatrix}
:=\begin{pmatrix}
\sigma (f_1)\\ 
\vdots\\
\sigma (f_r)
\end{pmatrix}
=\Phi\begin{pmatrix}
 f_1\\ 
 \vdots\\
 f_r
\end{pmatrix}.\]

\subsection{$\Ext^1$-modules}\label{Ext}
Let $M$, $M'$ be two objects in $\mathscr{F}$ defined by two matrices $\Phi$, $\Phi'$ respectively for which $\Phi=\begin{pmatrix}
 \Phi'& 0\\ 
 \mathbf{v}& 1
\end{pmatrix}\in\Mat_r(\bar{k}[t])$ $(r\geq 2)$ for some row vector $\mathbf{v}$. Since $M$ fits into the short exact sequence of Frobenius modules \[ 0\rightarrow M'\rightarrow M\rightarrow \mathbf{1}\rightarrow 0,\] $M$ represents a class in $\Ext_{\mathscr{F}}^1(\mathbf{1},M').$

The set $\Ext_{\mathscr{F}}^1(\mathbf{1},M')$ forms a group under the Baer sum. Furthermore, it has an $\mathbb{F}_q[t]$-module structure given by the following. Given $M_1$, $M_2$ representing classes in $\Ext_{\mathscr{F}}^1(\mathbf{1},M')$ defined by two matrices $\Phi_1$, $\Phi_2\in\Mat_r(\bar{k}[t])$ respectively, we write\[\Phi_1=\begin{pmatrix}
 \Phi'& 0\\ 
 \mathbf{v_1}& 1
\end{pmatrix},~ \Phi_2=\begin{pmatrix}
 \Phi'& 0\\ 
 \mathbf{v_2}& 1
\end{pmatrix}.\] Then the Baer sum of the two classes of $M_1$, $M_2$ is the class of the object $M_1 \underset{B}{+}M_2\in\mathscr{F}$ defined by the matrix \[\begin{pmatrix}
 \Phi'& 0\\ 
 \mathbf{v_1}+\mathbf{v_2}& 1
\end{pmatrix}\in\Mat_r(\bar{k}[t]).\] Given any $a\in\mathbb
{F}_q[t]$, the action of $a\in\mathbb{F}_q[t]$ on the class of $M_1$ is the class of the object $a\ast M_1\in\mathscr{F}$ defined by the matrix \[\begin{pmatrix}
 \Phi'& 0\\ 
a \mathbf{v_1}& 1
\end{pmatrix}\in\Mat_r(\bar{k}[t]).\]

\subsection{The Main Theorem}\label{notation}
\begin{defi}
We fix positive integers $r,s\in\mathbb{N}$ and let $n:=r+s$ be given. An $n$-tuple $\mathfrak{C}=(b_0,a_1,\dots,a_{n-1})\in k^{n}$ is said to have the \textit{\ref{shu}-property} if the following equation holds:
\begin{equation}\label{shu}\tag{SR}
\zeta_A(r)\zeta_A(s)-\zeta_A(r,s)-\zeta_A(s,r)=b_0\zeta_A(n)+\sum_{i=1}^{n-1}a_i\zeta_A(i,n-i). 
\end{equation}

\end{defi}

\begin{rmk}The equations of the above form, called shuffle relations by Thakur, were first studied by Thakur in \cite{Thakurshuffle}, and he proved the existence of $n$-tuples in $\mathbb{F}_p^n\subset k^n$ having the \ref{shu}-property in the same paper. Chen \cite{Chen2015153} gave an explicit $n$-tuple in $\mathbb{F}_p^n\subset k^n$ having the \ref{shu}-property.
\end{rmk}

\begin{defi}\label{ab}
Given $\mathfrak{C}=(b_0,a_1,\dots,a_{n-1})\in k^n$ and letting $\Gamma_\mathfrak{C}\in A$ be the monic least common multiple of the denominators of $\{\frac{a_i}{\Gamma_i\Gamma_{n-i}}:i=1,\dots, n-1\}\cup\{\frac{b_0}{\Gamma_n},~\frac{1}{\Gamma_r\Gamma_{s}}\}$, we put \[\alpha_i:=\frac{a_i\Gamma_\mathfrak{C}}{\Gamma_i\Gamma_{n-i}}|_{\theta=t}\in \mathbb{F}_q[t],~\beta_0:=\frac{b_0\Gamma_\mathfrak{C}}{\Gamma_n}|_{\theta=t}\in \mathbb{F}_q[t],~\gamma_0:=\frac{\Gamma_\mathfrak{C}}{\Gamma_r\Gamma_{s}}|_{\theta=t}\in \mathbb{F}_q[t]\]
for each $i$. Then we define the \textit{associated Frobenius module $M_\mathfrak{C}$ of $\mathfrak{C}$} which is defined by the matrix $\Phi_\mathfrak{C}\in \Mat_{n+1}(\overline{k}[t])$:
\[\Phi_\mathfrak{C} =
\begin{pmatrix}
(t-\theta)^{n}& & & &\\ 
 H^{(-1)}_{1-1}(t-\theta)^n& (t-\theta)^{n-1}& & & \\
\vdots  & & \ddots& &\\
H^{(-1)}_{(n-1)-1}(t-\theta)^n & & & (t-\theta)&\\
{\Phi_\mathfrak{C}}_{(n+1),1}& \alpha_1H^{(-1)}_{(n-1)-1}(t-\theta)^{n-1}& \cdots& \alpha_{n-1}H^{(-1)}_{1-1}(t-\theta)^1&1
\end{pmatrix},\]
where \[{\Phi_\mathfrak{C}}_{(n+1),1}:=\beta_0H^{(-1)}_{n-1}(t-\theta)^n-\gamma_0H^{(-1)}_{r-1}H^{(-1)}_{s-1}(t-\theta)^{n}.\]
\end{defi}

Let $\Phi'\in \Mat_{n}(\overline{k}[t])$ be the square matrix of size $n$ in the upper left-hand corner of $\Phi_\mathfrak{C}$, \textit{i.e.}\begin{equation}\label{phi}\Phi'=\begin{pmatrix}
(t-\theta)^{n}& & & \\ 
 H^{(-1)}_{1-1}(t-\theta)^n& (t-\theta)^{n-1}& & \\
\vdots  & & \ddots& \\
H^{(-1)}_{(n-1)-1}(t-\theta)^n & & & (t-\theta)
\end{pmatrix},\end{equation} and let $M'$ be the Frobenius module defined by $\Phi'$. Note that the Frobenius module $M_\mathfrak{C}$ represents a class in $\Ext_{\mathscr{F}}^1(\mathbf{1},M')$. 

If the $n$-tuple $\mathfrak{C}$ has the \ref{shu}-property, we show that the corresponding Frobenius module $M_\mathfrak{C}$ represents an $\mathbb{F}_q[t]$-torsion class in $\Ext_{\mathscr{F}}^1(\mathbf{1},M')$. Conversely, if $M_\mathfrak{C}$ represents an $\mathbb{F}_q[t]$-torsion class in $\Ext_{\mathscr{F}}^1(\mathbf{1},M')$, it is natural to ask if the $n$-tuple $\mathfrak{C}$ has the \ref{shu}-property.

Our main result is stated as follows.

\begin{thm}\label{main}
Let $\mathfrak{C}=(b_0,a_1,\dots,a_{n-1})\in k^n$ be given.
\begin{enumerate}[(1)]
\item If the $n$-tuple $\mathfrak{C}$ has the \ref{shu}-property, then the Frobenius module $M_\mathfrak{C}$ represents an $\mathbb{F}_q[t]$-torsion class in $\Ext_{\mathscr{F}}^1(\mathbf{1},M')$.\label{main1}
\item Suppose that the Frobenius module $M_\mathfrak{C}$ represents an $\mathbb{F}_q[t]$-torsion class in $\Ext_{\mathscr{F}}^1(\mathbf{1},M')$.\label{main2}
\begin{enumerate}
\item If $(q-1)\nmid n$, then the $n$-tuple $\mathfrak{C}$ has the \ref{shu}-property.
\item If $(q-1)\mid n$, then there exists unique $\widetilde{b}_0\in k$ such that the $n$-tuple $\widetilde{
\mathfrak{C}}=(\widetilde{b}_0,a_1,\dots,a_{n-1})$ has the \ref{shu}-property.\label{main22}
\end{enumerate}
\end{enumerate}
\end{thm}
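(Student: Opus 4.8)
The plan is to prove both implications by converting the \ref{shu}-property into the algebraic statement that the point $\mathbf{v}_\mathfrak{C}$ is $\mathbb{F}_q[t]$-torsion, by way of the period interpretation of multizeta values. Set
\[
Z_\mathfrak{C}:=\zeta_A(r)\zeta_A(s)-\zeta_A(r,s)-\zeta_A(s,r)-b_0\zeta_A(n)-\sum_{i=1}^{n-1}a_i\zeta_A(i,n-i),
\]
so that $\mathfrak{C}$ has the \ref{shu}-property precisely when $Z_\mathfrak{C}=0$. Since $\det\Phi_\mathfrak{C}=(t-\theta)^{n(n+1)/2}$, the Frobenius modules $M'$ and $M_\mathfrak{C}$ are rigid analytically trivial; one first builds trivializations $\Psi'$ and $\Psi_\mathfrak{C}=\left(\begin{smallmatrix}\Psi'&0\\ \mathbf{h}_\mathfrak{C}&1\end{smallmatrix}\right)$ out of the Anderson--Thakur series $\Omega$, characterized by $\Omega^{(-1)}=(t-\theta)\Omega$, and the series interpolating zeta and double-zeta values, with $\mathbf{h}_\mathfrak{C}$ a solution of the extension equation $\mathbf{h}_\mathfrak{C}^{(-1)}=\mathbf{h}_\mathfrak{C}+\mathbf{v}_\mathfrak{C}\Psi'$. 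The central point is that the entry of $\Psi_\mathfrak{C}(\theta)$ attached to the $\mathbf{1}$-quotient --- equivalently, the distinguished coordinate of $\mathrm{Log}_{E'}(\mathbf{v}_\mathfrak{C})$ under the identification $\Ext_{\mathscr{F}}^1(\mathbf{1},M')\cong E'$ established in the appendix --- equals $Z_\mathfrak{C}$ up to an explicit nonzero factor built from $\tilde{\pi}^{n}$ and $\Gamma_\mathfrak{C}(\theta)$. In this identification the entry $\gamma_0 H^{(-1)}_{r-1}H^{(-1)}_{s-1}(t-\theta)^{n}$ of $\Phi_\mathfrak{C}$ contributes the shuffle combination $\zeta_A(r)\zeta_A(s)-\zeta_A(r,s)-\zeta_A(s,r)$, which is the $t$-motivic form of Thakur's shuffle phenomenon; the entry $\beta_0 H^{(-1)}_{n-1}(t-\theta)^{n}$ contributes $-b_0\zeta_A(n)$; and the entries $\alpha_iH^{(-1)}_{(n-i)-1}(t-\theta)^{n-i}$ contribute $-\sum_{i}a_i\zeta_A(i,n-i)$ through the depth-two multizeta submotives $\langle f_1,f_{i+1}\rangle$ of $M'$.

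For the first assertion, assume $Z_\mathfrak{C}=0$; this is a $\overline{k}$-linear relation among the entries of $\Psi_\mathfrak{C}(\theta)$. By a direct telescoping argument --- or, equivalently, by Papanikolas's linear independence criterion applied to this relation --- one produces $0\ne a\in\mathbb{F}_q[t]$ together with a row vector $\mathbf{u}\in\Mat_{1\times n}(\overline{k}(t))$, regular at $t=\theta$, solving the difference equation that trivializes the extension $a\ast M_\mathfrak{C}$; hence $a\ast M_\mathfrak{C}$ is the split class and $M_\mathfrak{C}$ represents an $\mathbb{F}_q[t]$-torsion class. This direction is essentially constructive, in the spirit of Anderson--Thakur, and uses no transcendence input.

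For the second assertion, assume $M_\mathfrak{C}$ represents an $\mathbb{F}_q[t]$-torsion class, so that $\mathbf{v}_\mathfrak{C}$ is a torsion point of $E'$ and $\mathrm{Log}_{E'}(\mathbf{v}_\mathfrak{C})\in\Lambda_{E'}\otimes_{\mathbb{F}_q[t]}k$, where $\Lambda_{E'}$ denotes the period lattice of $E'$. Reading off the distinguished coordinate, $Z_\mathfrak{C}$ is then a $k$-linear combination of the corresponding coordinates of an $\mathbb{F}_q[t]$-basis of $\Lambda_{E'}$. Since $M'$ has a weight filtration with graded pieces the Carlitz tensor powers $C^{\otimes n},\dots,C^{\otimes 1}$, these periods are explicit and split by weight: the weight-$n$ part of $\Lambda_{E'}$ is generated by a period proportional to $\tilde{\pi}^{n}$, and as $Z_\mathfrak{C}$ is pure of weight $n$, a comparison of weights forces $Z_\mathfrak{C}\in k\cdot\tilde{\pi}^{n}$ --- that is, \eqref{shueq} holds modulo $\tilde{\pi}^{n}$. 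On the other hand, the explicit shuffle relation \eqref{chen} lets us rewrite $Z_\mathfrak{C}$ as a $k$-linear combination of $\zeta_A(n)$ and of the double zeta values $\zeta_A(i,n-i)$. If $(q-1)\nmid n$ and $Z_\mathfrak{C}\ne0$, then $\tilde{\pi}^{n}$ would be a $k$-linear combination of $\zeta_A(n)$ and the $\zeta_A(i,n-i)$, contradicting the linear independence results of Chang \cite{2015arXiv151006519C}; hence $Z_\mathfrak{C}=0$ and $\mathfrak{C}$ has the \ref{shu}-property. If $(q-1)\mid n$, the Carlitz--Euler formula gives $\tilde{\pi}^{n}=c_0\,\zeta_A(n)$ for an explicit $c_0\in k^{\times}$; writing $Z_\mathfrak{C}=\lambda\tilde{\pi}^{n}$ and setting $\widetilde{b}_0:=b_0+\lambda c_0$ makes $\widetilde{\mathfrak{C}}=(\widetilde{b}_0,a_1,\dots,a_{n-1})$ satisfy $Z_{\widetilde{\mathfrak{C}}}=0$, and if $\widetilde{b}_0$ and $\widetilde{b}_0'$ both work then $(\widetilde{b}_0-\widetilde{b}_0')\zeta_A(n)=0$, so $\widetilde{b}_0=\widetilde{b}_0'$ by the non-vanishing of $\zeta_A(n)$ (Remark \ref{non}).

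The main obstacle is the central computation underlying the first paragraph: producing $\Psi_\mathfrak{C}$ with the polynomial normalization $\Gamma_\mathfrak{C}$ so that every entry lies in the appropriate Tate algebra and is regular at $t=\theta$, and, above all, proving that the entry $\gamma_0 H^{(-1)}_{r-1}H^{(-1)}_{s-1}(t-\theta)^{n}$ contributes exactly $\zeta_A(r)\zeta_A(s)-\zeta_A(r,s)-\zeta_A(s,r)$ to the period --- in effect re-deriving \eqref{chen} on the $t$-motivic side by controlling the interaction among the Anderson--Thakur polynomials $H_{r-1}$, $H_{s-1}$ and $H_{n-1}$. The remaining ingredient, the identification $\Ext_{\mathscr{F}}^1(\mathbf{1},M')\cong E'$ carrying $M_\mathfrak{C}$ to $\mathbf{v}_\mathfrak{C}$, follows the template of \cite{2014arXiv1411.0124C} and is deferred to the appendix, while the weight comparison and the appeal to transcendence in the second assertion are carried out in the main proof.
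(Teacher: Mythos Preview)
For Part~(1) your outline matches the paper's: build $\psi_\mathfrak{C}$ with $\psi_\mathfrak{C}^{(-1)}=\Phi_\mathfrak{C}\psi_\mathfrak{C}$, note that the \ref{shu}-property kills its last entry at $t=\theta$, and apply the ABP criterion \cite[Thm.~3.1.1]{ABP04}. Two remarks. First, what you flag as ``the main obstacle'' is not one: that the entry $-\gamma_0 H^{(-1)}_{r-1}H^{(-1)}_{s-1}(t-\theta)^{n}$ contributes $-\gamma_0(\mathscr{L}^{[r]}\mathscr{L}^{[s]}-\mathscr{L}^{[r,s]}-\mathscr{L}^{[s,r]})$ to the last entry of $\psi_\mathfrak{C}$ is a one-line telescoping identity for these series --- no interaction with $H_{n-1}$ and no re-derivation of \eqref{chen} is involved. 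Second, the step you do gloss over is the one \emph{after} ABP: the criterion yields $\mathbf{f}\in\Mat_{1\times(n+1)}(\bar{k}[t])$ with $\mathbf{f}\psi_\mathfrak{C}=0$, but to trivialize $c\ast M_\mathfrak{C}$ one must verify that $\widetilde{\mathbf{f}}:=\mathbf{f}/f_{n+1}$ satisfies $\widetilde{\mathbf{f}}-\widetilde{\mathbf{f}}^{(-1)}\Phi_\mathfrak{C}=0$. This is the paper's claim \eqref{claim1}, and its proof requires the separate Lemma~\ref{zero} ruling out nontrivial $\bar{k}(t)$-relations among the $\Omega^{n-i+1}\mathscr{L}^{[i-1]}$; neither ``telescoping'' nor Papanikolas's criterion supplies it.

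For Part~(2) your route through $\mathrm{Log}_{E'}$, the period lattice $\Lambda_{E'}$, and a ``weight comparison'' diverges from the paper and has a real gap. The paper never invokes $E'$ in this direction: it writes an explicit matrix $Y$ with $Y^{(-1)}=XY$ (where $X$ defines $c\ast M_\mathfrak{C}$), uses \cite[\S4.1.6]{P08} to force a certain row $\bm{\nu}$ into $\mathbb{F}_q(t)^n$ (hence $\mathbb{F}_q[t]^n$, since each entry already lies in $\mathbb{T}$), and evaluates at $t=\theta^q$ to obtain directly $\nu_1(\theta)\,\tilde{\pi}^n/(c(\theta)\Gamma_\mathfrak{C})=-Z_\mathfrak{C}$. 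For $(q-1)\nmid n$ the conclusion then follows from the \emph{elementary} fact that $\tilde{\pi}^n\notin k_\infty$ while $Z_\mathfrak{C}\in k_\infty$ --- no appeal to \cite{2015arXiv151006519C} is made or needed. Your weight comparison asserts precisely the conclusion $Z_\mathfrak{C}\in k\cdot\tilde{\pi}^n$ without proof: $\Lambda_{E'}$ for an iterated extension of Carlitz tensor powers is not the direct sum of the graded period lattices, and identifying the relevant logarithm coordinate with $Z_\mathfrak{C}$ up to a factor in $k$ is exactly what the paper's explicit computation with $Y$ and $\bm{\nu}$ establishes. Invoking Chang's independence results afterwards is both unnecessary and contrary to the self-contained aim of the theorem; the paper explicitly relegates that connection to \S\ref{Another} as an alternative approach.
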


\section{Proof of the Theorem \ref{main}}\label{pf}
\subsection{Some important properties}
\begin{defi}[\cite{ABP04}]
A formal power series $\sum_{n=0}^\infty a_nt^n\in \bar{k}[\![ t]\!]$ is called \textit{entire} if \[\lim_{n\to\infty}\sqrt[n]{|a_n|_\infty}=0,\text{ and }[k_\infty(a_0,a_1,\dots):k_\infty]<\infty.\]The set of all entire functions is denoted by $\mathscr{E}.$
\end{defi}
We fix a fundamental period $\tilde{\pi}$ of the Carlitz module $\mathcal{C}$ (see \cite{Goss96,Thakurfunctionfield}), and define \[\Omega (t):=(-\theta)^{\frac{-q}{q-1}}\prod_{i=1}^\infty\left( 1-\frac{t}{\theta^{q^i}}\right)\in \mathbb{C}_\infty[\![ t]\!],\] where $(-\theta)^{\frac{-1}{q-1}}$ is a choice of $(q-1)$th root of $-\theta$ so that $\frac{1}{\Omega (\theta)}=\tilde{\pi}$. Note that the power series is entire and we have the functional equation $\Omega^{(-1)}(t)=(t-\theta)\Omega(t)$ (see \cite{ABP04}).

The following are important properties developed by Anderson and Thakur (see \cite{AT90,AT09}): 
\begin{equation}\label{ppp}
(\Omega^sH_{s-1})^{(d)}(\theta)=\frac{\Gamma_s S_d(s)}{\tilde{\pi}^s},\quad\forall s\in\mathbb{N},~d\in\mathbb{Z}_{\geq 0},
\end{equation} where $S_d(s)$ is the power sum \[S_d(s):=\sum_{\substack{a\in A_+\\ \deg_\theta a=d}}\frac{1}{a^s}\in k.\] Furthermore, if we view $H_n$ as a polynomial in $\mathbb{F}_q[t][\theta]$, then we also have \begin{equation}\label{deg}
\deg_\theta H_n\leq\frac{nq}{q-1}.
\end{equation}

Given an $r$-tuple of positive integers $\mathfrak{s}=(s_1,\dots,s_r)$ and let $\mathfrak{Q}$ be the $r$-tuple of Anderson-Thakur polynomials $\mathfrak{Q}:=(H_{s_1-1},\dots,H_{s_r-1})$, we define the series \[\mathscr{L}_{\mathfrak{s},\mathfrak{Q}}:=\sum_{i_1>\cdots >i_r\geq0}(\Omega^{s_r}H_{s_r-1})^{(i_r)}\cdots (\Omega^{s_1}H_{s_1-1})^{(i_1)}\in\mathbb{C}_\infty[\![ t]\!]\text{ (cf. \cite{AT09})}.\] Since $\Omega$ satisfies the functional equation $\Omega^{(1)} (t)=\frac{\Omega (t)}{(t-\theta)}$, we have \[\mathscr{L}_{\mathfrak{s},\mathfrak{Q}}=\Omega^{s_1+\cdots +s_r}\sum_{i_1>\cdots >i_r\geq0}\frac{H_{s_r-1}^{(i_r)}(t)\cdots H_{s_1-1}^{(i_1)}(t)}{[(t-\theta^q)\cdots(t-\theta^{q^{i_r}})]^{s_r}\cdots[(t-\theta^q)\cdots(t-\theta^{q^{i_1}})]^{s_1}}.\] By \eqref{deg}, the series $\mathscr{L}_{\mathfrak{s},\mathfrak{Q}}$ is in the Tate algebra $\mathbb{T}$, where \[\mathbb{T}:=\{f\in\mathbb{C}_\infty[\![ t]\!]:f\text{ converges on }|t|_\infty\leq 1\}.\]

For $1\leq \ell<j\leq r+1$, we define the series\[\mathscr{L}_{j,\ell}:=\sum_{i_\ell>\cdots >i_{j-1}\geq0}(\Omega^{s_{j-1}}H_{s_{j-1}-1})^{(i_{j-1})}\cdots(\Omega^{s_\ell}H_{s_\ell-1})^{(i_\ell)}\in\mathbb{C}_\infty[\![ t]\!]. \] Note that we have $\mathscr{L}_{\mathfrak{s},\mathfrak{Q}}=\mathscr{L}_{r+1,1}$, and (\ref{ppp}) gives \[\mathscr{L}_{r+1,1}(\theta)=\tilde{\pi}^{-(s_1+\cdots+s_r)}\Gamma_{s_1}\cdots\Gamma_{s_r}\zeta_A(s_1,\dots,s_r).\]

\begin{rmk}\label{remm} 
We also have the following properties:
\begin{enumerate}[(1)]
\item Chang \cite[Lem. 5.3.1]{Chang14} showed that $\mathscr{L}_{j,\ell}$ is actually an entire function for all $\ell,$ $j$ with $1\leq \ell<j\leq r+1$.

\item By \cite[Prop. 2.3.3]{2014arXiv1411.0124C}, we have for $1\leq \ell<j\leq r+1$, \[\label{prop}
\mathscr{L}_{j,\ell}(\theta^{q^N})=\mathscr{L}_{j,\ell}(\theta)^{q^N}
\text{ for all }N\in\mathbb{N}.\]

\item The equation (\ref{ppp}) and \textit{Remark} \ref{non} give that $\mathscr{L}_{j,\ell}$ is non-vanishing at $\theta^{q^N}$ for all $\ell,$ $j$ with $1\leq \ell<j\leq r+1$, $N\in\mathbb{Z}_{\geq 0}$.\label{nonvanished}
\end{enumerate}
\end{rmk}

\subsection{A key lemma}

\begin{lem}\label{zero}
Let $\{s_i\}_{i=1}^I\subseteq \mathbb{Z}_{\geq 0}$ be a strictly increasing finite sequence, and let $\{L_i\}_{i=1}^{I}\subseteq\mathbb{T}$ satisfying \[L_i(\theta^{q^N})\neq 0 \] for all $N\in \mathbb{N}\cup\{0\}$, $i \in\{1,\dots,I\}$ be given. For any $\{B_i\}_{i=1}^I\subseteq \bar{k}(t)$ satisfying \begin{equation}\label{l}
\sum_{i=1}^IB_i\Omega^{s_i}L_i=0,
\end{equation} we have $B_i=0$ for all $i \in\{1,\dots,I\}.$
\end{lem}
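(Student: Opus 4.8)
The plan is to clear denominators and pass to the Tate algebra, then exploit the growth behaviour of $\Omega$ together with the non-vanishing hypothesis on the $L_i$ at the points $\theta^{q^N}$. First I would multiply \eqref{l} through by a common denominator $D(t)\in\bar k[t]$ of the rational functions $B_i$, so that without loss of generality we may assume each $B_i\in\bar k[t]$; the conclusion $B_i=0$ is unaffected. After this reduction, the identity $\sum_{i=1}^I B_i\Omega^{s_i}L_i=0$ lives in $\mathbb T$, since each $\Omega^{s_i}L_i\in\mathbb T$ and $B_i\in\bar k[t]\subset\mathbb T$.

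The key idea is to use the functional equation $\Omega^{(-1)}=(t-\theta)\Omega$, equivalently $\Omega^{(1)}=\Omega/(t-\theta)$, to separate the summands by the power of $\Omega$ they carry. Let $s_1<s_2<\cdots<s_I$ and suppose not all $B_i$ vanish; let $i_0$ be the largest index with $B_{i_0}\neq 0$. Dividing the relation by $\Omega^{s_1}$ (legitimate in the fraction field) and rewriting gives $\sum_i B_i\,\Omega^{s_i-s_1}L_i=0$; the term with $i=1$ contributes $B_1 L_1$, which is entire, while for $i>1$ the factor $\Omega^{s_i-s_1}$ carries a genuine positive power of $\Omega$. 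The essential point is that $\Omega$ has a zero at each $t=\theta^{q^N}$ ($N\geq 1$) of order exactly one, coming from the infinite product $\prod_{i\geq 1}(1-t/\theta^{q^i})$, and moreover $\Omega(\theta)\neq 0$. So I would specialize the identity $\sum_i B_i\Omega^{s_i-s_1}L_i=0$ at $t=\theta$: every summand with $i>1$ vanishes because $s_i-s_1\geq 1$ and $\Omega(\theta)^{s_i-s_1}$... wait, $\Omega(\theta)\neq 0$. The correct specialization point is $t=\theta^{q^N}$ for large $N$, where $\Omega$ vanishes; one extracts the leading term by dividing out the appropriate power of $(t-\theta^{q^N})$ before evaluating, or equivalently by comparing orders of vanishing.

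Concretely, I expect the cleanest route is an inductive/valuation argument: among the nonzero $B_i$, consider the smallest $s_i$ appearing, say $s_{j}$; divide the whole relation by $\Omega^{s_{j}}$, so that the $j$-th term becomes $B_{j}L_{j}$ and is finite and nonzero at $t=\theta^{q^N}$ for suitable $N$ (using $L_{j}(\theta^{q^N})\neq 0$ and that $B_{j}$, a nonzero polynomial, avoids the infinitely many points $\theta^{q^N}$ for $N$ large), whereas every other term $B_i\Omega^{s_i-s_{j}}L_i$ vanishes at $t=\theta^{q^N}$ because $s_i-s_{j}\geq 1$ forces a zero of $\Omega^{s_i-s_{j}}$ there while $B_iL_i$ stays bounded. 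Evaluating at such a point $\theta^{q^N}$ then yields $B_{j}(\theta^{q^N})L_{j}(\theta^{q^N})=0$, a contradiction. Hence all $B_i=0$.

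\textbf{Main obstacle.} The delicate part is making the "evaluate at $\theta^{q^N}$" step rigorous: one must know that the power series $B_i\Omega^{s_i-s_j}L_i$, after multiplying out, genuinely extends to (or is already) an honest analytic function near $t=\theta^{q^N}$ so that pointwise evaluation is meaningful and respects the vanishing of $\Omega$ there. Since $\Omega\in\mathbb T$ and the $L_i\in\mathbb T$ with $B_i\in\bar k[t]$, the products lie in $\mathbb T$, hence converge on $|t|_\infty\le 1$; but the points $\theta^{q^N}$ have $|\theta^{q^N}|_\infty=q^{q^N}>1$, so one cannot simply evaluate inside $\mathbb T$. This is exactly why the hypothesis is phrased with $\theta^{q^N}$ and why Remark \ref{remm}(1)--(2) record that the relevant functions are actually \emph{entire} and satisfy $\mathscr L_{j,\ell}(\theta^{q^N})=\mathscr L_{j,\ell}(\theta)^{q^N}$: the argument really needs to take place in $\mathscr E$ rather than merely in $\mathbb T$. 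So I would first argue that from \eqref{l} one may assume the $L_i$ (and hence, after clearing denominators, the whole expression) are entire — or reduce to that case — and then carry out the order-of-vanishing comparison of entire functions at $t=\theta^{q^N}$, where $\Omega$ is known to vanish to order one and the $L_i$ do not vanish at all. Getting this reduction to the entire setting clean, and correctly bookkeeping the orders of vanishing at $\theta^{q^N}$ as $N\to\infty$, is the crux.
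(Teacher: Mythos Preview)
Your proposal is correct and follows essentially the same approach as the paper: divide through by the smallest power of $\Omega$ appearing, then evaluate at $t=\theta^{q^N}$ for large $N$, using that $\Omega$ has a simple zero there while $L_j(\theta^{q^N})\neq 0$ and a nonzero $B_j\in\bar k(t)$ cannot vanish at infinitely many such points. The paper phrases it as a straightforward induction (show $B_1=0$, peel it off, repeat) rather than your contradiction set-up picking the minimal $s_j$ among nonzero terms, but the content is identical.

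Your ``main obstacle'' is a perceptive observation, but it is a wrinkle in the \emph{statement} of the lemma rather than a gap in the argument: the hypothesis $L_i(\theta^{q^N})\neq 0$ already presupposes that $L_i$ is defined at those points, so one is implicitly working with functions that extend beyond the closed unit disk. The paper's own proof does not comment on this and simply evaluates at $\theta^{q^N}$; in the applications (Remark~\ref{remm}) the $L_i$ are in fact entire, which is what makes the specialization legitimate. So you need not worry about a separate ``reduction to the entire setting'' --- just use the hypothesis at face value.
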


\begin{proof}
(cf. the proof of \cite[Thm. 2.5.2]{2014arXiv1411.0124C}, \cite[Thm. 3.1.1]{2015arXiv151006519C}) First, we divide the equation (\ref{l}) by $\Omega^{s_1}$. Then it becomes \begin{equation}\label{iii}
B_1L_1+\sum_{i=2}^IB_i\Omega^{\widetilde{s}_i}L_i=0
\end{equation} where $\widetilde{s}_i =s_i-s_1$. Note that each $B_i$ is defined at $\theta^{q^N}$ for sufficiently large $N\in \mathbb{N}$ since $B_i$ belongs to $\bar{k}(t)$. Also note that $\Omega$ has a simple zero at $\theta^{q^N}$ for each $N\in \mathbb{N}$, and hence (\ref{iii}) gives rise to \[B_1(\theta^{q^N})L_1(\theta^{q^N})=0\] for sufficiently large $N\in \mathbb{N}.$ By the assumption $L_1(\theta^{q^N})\neq 0$ for all  $N\in \mathbb{N},$ which implies that \[B_1(\theta^{q^N})=0\] for all large $N\in\mathbb{N}$, whence $B_1=0$. 

The equation (\ref{l}) becomes \[\sum_{i=2}^IB_i\Omega^{\widetilde{s}_i}L_i=0.\] Since  $\{\widetilde{s}_i\}_{i=2}^I$ is also a strictly increasing finite sequence, we repeat the same process above and then conclude that $B_i=0$ for all $i\in\{1,\dots,I\}.$
\end{proof}

\subsection{Proof of the Theorem \ref{main}(\ref{main1})}
(cf. the proof of \cite[Thm. 2.5.2]{2014arXiv1411.0124C}, \cite[Thm. 3.1.1]{2015arXiv151006519C}) 

For $i=1,\dots,n-1,$ we define two series as follows:
\begin{align*}
\mathscr{L}^{[i,n-i]}& :=\sum_{\ell_1>\ell_2\geq 0}\left(\Omega^{n-i}H_{(n-i)-1}\right)^{(\ell_2)}\left(\Omega^{i}H_{i-1}\right)^{(\ell_1)}, \\
\mathscr{L}^{[i]}& :=\sum_{\ell\geq 0}\left(\Omega^iH_{i-1}\right)^{(\ell)}.\\
\end{align*}
Let $\psi_\mathfrak{C}\in\Mat_{(n+1)\times 1}(\mathscr{E})$ be defined by
\[\psi_\mathfrak{C}=
\begin{pmatrix}
\Omega^n\\
\Omega^{n-1}\mathscr{L}^{[1]}\\
\vdots\\
\Omega\mathscr{L}^{[n-1]}\\
\beta_0\mathscr{L}^{[n]}+\sum_{i=1}^{n-1}\alpha_i\mathscr{L}^{[i,n-i]}-\gamma_0(\mathscr{L}^{[r]}\mathscr{L}^{[s]}-\mathscr{L}^{[r,s]}-\mathscr{L}^{[s,r]})
\end{pmatrix}.
\]
Then we have $\psi_\mathfrak{C}^{(-1)}=\Phi_\mathfrak{C}\psi_\mathfrak{C}$ and \[\psi_\mathfrak{C}(\theta)=\tilde{\pi}^{-n}
\begin{pmatrix}
1\\
\Gamma_{1}\zeta_A(1)\\
\vdots\\
\Gamma_{n-1}\zeta_A(n-1)\\
\Gamma_\mathfrak{C}[b_0\zeta_A(n)+\sum_{i=1}^{n-1}a_i\zeta_A(i,n-i)-\left( \zeta_A(r)\zeta_A(s)-\zeta_A(r,s)-\zeta_A(s,r)\right)]
\end{pmatrix}\]is in $\Mat_{(n+1)\times 1}(\overline{k_\infty})$.

Let $\mathbf{v}=(0,\dots,0,1)\in \Mat_{1\times (n+1)}(\overline{k})$. Then the the \ref{shu}-property gives $\mathbf{v}\psi_\mathfrak{C}(\theta)=0$. Note that the above satisfies the assumption of \cite[Thm. 3.1.1]{ABP04}. By \cite[Thm. 3.1.1]{ABP04}, there exists \[\mathbf{f}=(f_1,\dots,f_{n+1})\in \Mat_{1\times (n+1)}(\bar{k}[t])\] such that $\mathbf{f}\psi_\mathfrak{C}=0$ and $\mathbf{f}(\theta)=\mathbf{v}.$ Now we put $\widetilde{\mathbf{f}}=\frac{1}{f_{n+1}}\mathbf{f} \in\Mat_{1\times (n+1)}(\bar{k}(t))$ and note that $\widetilde{\mathbf{f}}$ is regular at $t=\theta $. We claim that
\begin{equation}\label{claim1}
\widetilde{\mathbf{f}}-\widetilde{\mathbf{f}}^{(-1)}\Phi_\mathfrak{C}=(0,\dots,0).
\end{equation}

Let us assume this claim first. Then we have the following equation
\begin{equation}\label{hom1}
\begin{pmatrix}
 1& &  & \\
 &  \ddots&  & \\ 
 &  & 1& \\ 
 \frac{f_1}{f_{n+1}}& \cdots &\frac{f_{n}}{f_{n+1}}  & 1
\end{pmatrix}^{(-1)}\Phi_\mathfrak{C}=
\begin{pmatrix}
 \Phi'& \\ 
 & 1
\end{pmatrix}
\begin{pmatrix}
 1& &  & \\
 &  \ddots&  & \\ 
 &  & 1& \\ 
 \frac{f_1}{f_{n+1}}& \cdots &\frac{f_{n}}{f_{n+1}}  & 1
\end{pmatrix}.
\end{equation}
The equation (\ref{hom1}) gives a left $\bar{k}(t)[\sigma]$-module homomorphism between $\bar{k}(t)\underset{\bar{k}[t]}{\otimes}(M'\oplus\mathbf{1})$ and $\bar{k}(t)\underset{\bar{k}[t]}{\otimes}M_\mathfrak{C}$. By \cite[Prop. 2.2.1]{2014arXiv1411.0124C}, the common denominator of $\frac{f_1}{f_{n+1}},\dots,\frac{f_{n}}{f_{n+1}}$, say $c$, is in $\mathbb{F}_q[t]$. Write $\Phi_\mathfrak{C}=\begin{pmatrix}
 \Phi'& \\ 
\mu & 1
\end{pmatrix}$ for some $\mu\in \Mat_{1\times (n+1)}(\bar{k}[t]),$ and put $c\ast \Phi_\mathfrak{C}:=\begin{pmatrix}
 \Phi'& \\ 
c\mu & 1
\end{pmatrix},$ which defines the Frobenius module $c\ast M_\mathfrak{C}$ representing a class in $\Ext_{\mathscr{F}}^1(\mathbf{1},M')$. The class $c\ast M_\mathfrak{C}$ represents the trivial class in $\Ext_{\mathscr{F}}^1(\mathbf{1},M')$ since we have the equation 
\begin{equation}
\begin{pmatrix}
 1& &  & \\
 &  \ddots&  & \\ 
 &  & 1& \\ 
 \delta_1& \cdots &\delta_{n+1}  & 1
\end{pmatrix}^{(-1)}c\ast\Phi_\mathfrak{C}=
\begin{pmatrix}
 \Phi'& \\ 
 & 1
\end{pmatrix}
\begin{pmatrix}
 1& &  & \\
 &  \ddots&  & \\ 
 &  & 1& \\ 
 \delta_1& \cdots &\delta_{n+1}  & 1
\end{pmatrix},
\end{equation} where $\delta_i=c\frac{f_i}{f_{n+1}}\in \bar{k}[t]$ for $i=1,\dots,n+1,$ \textit{i.e.}, the class of $M_\mathfrak{C}$ is $c$-torsion in the $\mathbb{F}_q[t]$-module $\Ext_{\mathscr{F}}^1(\mathbf{1},M')$.

To complete the proof, we need to verify the equation (\ref{claim1}). Applying the Frobenius twisting $(\cdot)^{(-1)}$ on the equation $\widetilde{\mathbf{f}}\psi_\mathfrak{C}=0$ and subtracting it from the equation $\widetilde{\mathbf{f}}\psi_\mathfrak{C}=0$, we have \[\left( \widetilde{\mathbf{f}}-\widetilde{\mathbf{f}}^{(-1)}\Phi_\mathfrak{C}\right)\psi_\mathfrak{C}=0.\] Let $(B_1,\dots,B_{n+1}):=\widetilde{\mathbf{f}}-\widetilde{\mathbf{f}}^{(-1)}\Phi$. Note that $B_{n+1}=0$ and the above equation becomes 
\begin{equation}\label{hypo1}
 \sum_{i=1}^{n}B_i\Omega^{n-i+1}\mathscr{L}^{[i-1]}=0
,\end{equation}where we define $\mathscr{L}^{[0]}=1$ for convenience. By \textit{Remark} \ref{remm}(\ref{nonvanished}), the equation (\ref{hypo1}) satisfies the hypothesis of Lemma \ref{zero}. It follows by Lemma \ref{zero} that $B_i=0$ for all $i=1,\dots,n+1,$ and so we complete the proof.

\subsection{Proof of the Theorem \ref{main}(\ref{main2})}\label{nota}
(cf. the proof of \cite[Thm. 2.5.2]{2014arXiv1411.0124C}) 

Suppose the Frobenius module $M_\mathfrak{C}$ represents an $\mathbb{F}_q[t]$-torsion class in $\Ext_{\mathscr{F}}^1(\mathbf{1},M'),$ \textit{i.e.}, there exists $c\in\mathbb{F}_q[t]\setminus \{ 0\}$ such that $c\ast M_\mathfrak{C}$ represents a trivial class in $\Ext_{\mathscr{F}}^1(\mathbf{1},M').$

Note that $c\ast M_\mathfrak{C}$ is defined by the matrix $X$ given as follows:

\[X=
\begin{pmatrix}
(t-\theta)^{n}& & & &\\ 
 H^{(-1)}_{1-1}(t-\theta)^n& (t-\theta)^{n-1}& & &\\
\vdots  & & \ddots& &\\
H^{(-1)}_{(n-1)-1}(t-\theta)^n & & & (t-\theta)&\\
X_{(n+1),1}& c\alpha_1H^{(-1)}_{(n-1)-1}(t-\theta)^{n-1}& \cdots& c\alpha_{n-1}H^{(-1)}_{1-1}(t-\theta)^1& 1
\end{pmatrix}\] where \[X_{(n+1),1}=c\left[ \beta_0H^{(-1)}_{n-1}(t-\theta)^n-\gamma_0H^{(-1)}_{r-1}H^{(-1)}_{s-1}(t-\theta)^{n}\right],\]and there exists $\delta_1,\dots,\delta_{n}\in \bar{k}[t]$ such that
\[\begin{pmatrix}
 1& & &\\ 
 & \ddots& &\\
& & 1& \\
 \delta_1& \dots& \delta_{n}& 1\\
\end{pmatrix}^{(-1)}X=\begin{pmatrix}
\Phi'&\\ 
&1\\
\end{pmatrix}\begin{pmatrix}
 1& & &\\ 
 & \ddots& &\\
& & 1& \\
 \delta_1& \dots& \delta_{n}& 1\\
\end{pmatrix}.\]

Consider 
\[Y=
\begin{pmatrix}
\Omega^n & & & & &\\
\Omega^{n-1}\mathscr{L}^{[1]}& \Omega^{n-1}& & & &\\
\Omega^{n-2}\mathscr{L}^{[2]}& \Omega^{n-2}& \Omega^{n-2}& & &\\
\vdots& \vdots& \vdots& \ddots& &\\
\Omega^{1}\mathscr{L}^{[n-1]}& \Omega^{1}& \Omega^{1}& \dots& \Omega^{1}&\\
Y_{n+1,1}& c\sum_{i=1}^{n-1}\alpha_i\mathscr{L}^{[n-i]}& c\sum_{i=2}^{n-1}\alpha_i\mathscr{L}^{[n-i]}& \dots& c\alpha_{n-1}\mathscr{L}^{[1]}& 1
\end{pmatrix}\]
where\[Y_{n+1,1}=c\left[\beta_0\mathscr{L}^{[n]}+\sum_{i=1}^{n-1}\alpha_i\mathscr{L}^{[i,n-i]}-\gamma_0(\mathscr{L}^{[r]}\mathscr{L}^{[s]}-\mathscr{L}^{[r]}-\mathscr{L}^{[s]})\right],\] then we have the relation $Y^{(-1)}=XY$. Putting $\mathcal{D}= (\delta_1,\dots\delta_{n})$ and $Y'=\begin{pmatrix}
 I_{n}&\\ 
\mathcal{D}& 1\\
\end{pmatrix} Y$, we have $Y'^{(-1)}=\begin{pmatrix}
\Phi'&\\ 
&1\\
\end{pmatrix}Y'$. Let $\Psi'$ be the square matrix of size $n$ cut from the upper left square of $Y'$. Since we also have $\begin{pmatrix}
\Psi'&\\ 
&1\\
\end{pmatrix}^{(-1)}=\begin{pmatrix}
\Phi'&\\ 
&1\\
\end{pmatrix}\begin{pmatrix}
\Psi'&\\ 
&1\\
\end{pmatrix}$, by \cite[\S 4.1.6]{P08}, there exists $\bm{\nu}=(\nu_1,\dots,\nu_{n})\in \mathbb{F}_q(t)^{n}$ such that \[Y'=\begin{pmatrix}
\Psi'&\\ 
&1\\
\end{pmatrix}\begin{pmatrix}
I_{n}&\\ 
\bm{\nu}&1\\
\end{pmatrix},\]\textit{i.e.},\[\begin{pmatrix}
 I_{n}&\\ 
\mathcal{D}& 1\\
\end{pmatrix} Y=\begin{pmatrix}
\Psi'&\\ 
&1\\
\end{pmatrix}\begin{pmatrix}
I_{n}&\\ 
\bm{\nu}&1\\
\end{pmatrix}.\] Therefore, we have
\begin{align*}
\nu_1={}& \sum_{i=1}^n\delta_i\Omega^{n-i+1}\mathscr{L}^{[i]}+c\left[\beta_0\mathscr{L}^{[n]}+\sum_{i=1}^{n-1}\alpha_i\mathscr{L}^{[i,n-i]}-\gamma_0(\mathscr{L}^{[r]}\mathscr{L}^{[s]}-\mathscr{L}^{[r]}-\mathscr{L}^{[s]})\right];\\
\nu_2={}& \sum_{i=2}^n\delta_i\Omega^{n-i+1}+c\sum_{i=1}^{n-1}\alpha_i\mathscr{L}^{[n-i]};\\
\vdots&\\
\nu_{n}={}& \sum_{i=n}^n\delta_i\Omega^{n-i+1}+c\sum_{i=n-1}^{n-1}\alpha_i\mathscr{L}^{[n-i]}(=\delta_n\Omega+c\alpha_{n-1}\mathscr{L}^{[1]});\\
\nu_{n+1}={}& \delta_{n+1}.
\end{align*}

We find that each $\nu_i$ is in $\mathbb{F}_q[t]$ since the right hand side of each equality above is in $\mathbb{T}.$ Now we evaluate $t=\theta^q$ in each equation above. Note that we work in fields with characteristic $p$ and $\Omega$ has a simple zero at $\theta^q$ . So by \textit{Remark} \ref{remm}(\ref{prop}), we get
\begin{align}\begin{split}
\nu_1(\theta)^q={}& (c(\theta)\Gamma_\mathfrak{C})^q\tilde{\pi}^{-nq}\left[ b_0\zeta_A(n)+\sum_{i=1}^{n-1}a_i\zeta_A(i,n-i)-(\zeta_A(r)\zeta_A(s)-\zeta_A(r,s)-\zeta_A(s,r))\right]^q;\\
\nu_2(\theta)^q={}& \left[c(\theta)\sum_{i=1}^{n-1}a_i\tilde{\pi}^{-(n-i)}\zeta_A(n-i)\right]^q;\\
\vdots&\\
\nu_{n}(\theta)^q={}& \left[c(\theta)\sum_{i=n-1}^{n-1}a_i\tilde{\pi}^{-(n-i)}\zeta_A(n-i)\right]^q(=\left[c(\theta)a_{n-1}\tilde{\pi}^{-1}\zeta_A(1)\right]^q).
\end{split}\end{align}
Taking the $q$th root of the both sides for each equation, we have

\begin{equation}\label{v1}
\nu_1(\theta)= c(\theta)\Gamma_\mathfrak{C}\tilde{\pi}^{-n}\left[ b_0\zeta_A(n)+\sum_{i=1}^{n-1}a_i\zeta_A(i,n-i)-(\zeta_A(r)\zeta_A(s)-\zeta_A(r,s)-\zeta_A(s,r))\right] ,
\end{equation}
and
\begin{align*}
\nu_2(\theta)={}& c(\theta)\sum_{i=1}^{n-1}a_i\tilde{\pi}^{-(n-i)}\zeta_A(n-i);\\
\vdots&\\
\nu_{n}(\theta)={}& c(\theta)\sum_{i=n-1}^{n-1}a_i\tilde{\pi}^{-(n-i)}\zeta_A(n-i)(=c(\theta)a_{n-1}\tilde{\pi}^{-1}\zeta_A(1));\\
\nu_{n+1}(\theta)={}& \delta_{n+1}(\theta).
\end{align*}

From (\ref{v1}), we have
\begin{equation}\label{re}
\frac{\nu_1(\theta)}{c(\theta)\Gamma_\mathfrak{C}}\tilde{\pi}^{n}=  b_0\zeta_A(n)+\sum_{i=1}^{n-1}a_i\zeta_A(i,n-i)-(\zeta_A(r)\zeta_A(s)-\zeta_A(r,s)-\zeta_A(s,r)).
\end{equation}

Note that if $(q-1)\nmid n$, then $\tilde{\pi}^{n}\notin k_\infty$. Since both $\frac{\nu_1(\theta)}{c(\theta)\Gamma_\mathfrak{C}}$ and the right hand side of (\ref{re}) are in $k_\infty$, we conclude that 
\[b_0\zeta_A(n)+\sum_{i=1}^{n-1}a_i\zeta_A(i,n-i)-(\zeta_A(r)\zeta_A(s)-\zeta_A(r,s)-\zeta_A(s,r))=0.
\]\textit{i.e.},\[\zeta_A(r)\zeta_A(s)-\zeta_A(r,s)-\zeta_A(s,r)=b_0\zeta_A(n)+\sum_{i=1}^{n-1}a_i\zeta_A(i,n-i).\]

If $(q-1)\mid n$, then by \cite{Car35} we know that \[\frac{\nu_1(\theta)}{c(\theta)\Gamma_\mathfrak{C}}\tilde{\pi}^{n}=b\zeta_A(n)\text{ for some }b\in k.\] We conclude that 
\[\zeta_A(r)\zeta_A(s)-\zeta_A(r,s)-\zeta_A(s,r)=\widetilde{b}\zeta_A(n)+\sum_{i=1}^{n-1}a_i\zeta_A(i,n-i),\] where $\widetilde{b}=b_0-b$. Note that the uniqueness of $\widetilde{b}$ is simply a consequence of \textit{Remark} \ref{non}.

\section{A necessary condition for the \ref{shu}-property}\label{fur}
Suppose the $n$-tuple $\mathfrak{C}=(b_0,a_1,\dots,a_{n-1})\in k^n$ has the \ref{shu}-property. Combining the shuffle relation with the relation (\ref{chen}) proved by Chen, we have a relation of the form \[b\zeta_A(n)+\sum_{i=1}^{n-1}a_i\zeta_A(i,n-i)=0.\]By \cite[Thm. 3.1.1]{2015arXiv151006519C}, we derive the following necessary condition for shuffle relations.

\begin{thm}
If an $n$-tuple $\mathfrak{C}=(b_0,a_1,\dots,a_{n-1})\in k^n$ has the \ref{shu}-property, then we have \[a_i=0\text{ if }(q-1)\nmid (n-i).\]
\end{thm}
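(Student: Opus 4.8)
The plan is to reduce the claim to the companion rigidity result on double zeta relations, namely \cite[Thm. 3.1.1]{2015arXiv151006519C}, exactly as the paragraph preceding the statement already sketches. Concretely, assume $\mathfrak{C}=(b_0,a_1,\dots,a_{n-1})\in k^n$ has the \ref{shu}-property. Combining the \ref{shu}-property equation \eqref{shu} with Chen's explicit shuffle identity \eqref{chen} and eliminating the common left-hand side $\zeta_A(r)\zeta_A(s)-\zeta_A(r,s)-\zeta_A(s,r)$, we obtain a linear relation over $k$ among $\zeta_A(n)$ and the double zeta values $\zeta_A(i,n-i)$ of the form
\[
\widetilde{b}_0\zeta_A(n)+\sum_{i=1}^{n-1}\widetilde{a}_i\zeta_A(i,n-i)=0,
\]
where $\widetilde{a}_i=a_i$ when $(q-1)\nmid i$ (so that the corresponding coefficient in \eqref{chen} vanishes) and $\widetilde{a}_i$ differs from $a_i$ only by the explicit binomial coefficient from \eqref{chen} when $(q-1)\mid i$; likewise $\widetilde{b}_0=b_0-1$.

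First I would write out this elimination carefully and record which indices $i$ are affected: the sum in \eqref{chen} runs over $i+j=n$ with $(q-1)\mid j$, i.e. exactly over those $i$ with $(q-1)\mid (n-i)$. Hence for indices $i$ with $(q-1)\nmid(n-i)$ the coefficient of $\zeta_A(i,n-i)$ in the eliminated relation is precisely $a_i$ (unchanged), while for $(q-1)\mid(n-i)$ it is $a_i$ plus the bracketed binomial expression. Next I would invoke \cite[Thm. 3.1.1]{2015arXiv151006519C}, which describes exactly which $k$-linear relations of the above shape can hold among these particular multizeta values; the content we need is that in any such relation the coefficient of $\zeta_A(i,n-i)$ must vanish whenever $(q-1)\nmid (n-i)$. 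Applying this to our eliminated relation, and using that for those indices the coefficient equals $a_i$, we conclude $a_i=0$ whenever $(q-1)\nmid (n-i)$, which is the assertion.

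The only real subtlety, and where I would spend care rather than effort, is making sure the hypotheses of \cite[Thm. 3.1.1]{2015arXiv151006519C} genuinely apply: one must check that the weight-$n$ double zeta values $\zeta_A(i,n-i)$ together with $\zeta_A(n)$ form the exact family to which that theorem speaks, and that the elimination does not accidentally produce a trivial relation (for instance if every $\widetilde a_i$ and $\widetilde b_0$ happened to be forced zero, the conclusion is still valid and in fact immediate). Since $\widetilde{a}_i=a_i$ for the relevant indices $i$, no information about the modified coefficients at indices with $(q-1)\mid(n-i)$ is needed, so I need not worry about whether the binomial terms in \eqref{chen} could cancel $a_i$ there. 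I expect the main obstacle to be purely bookkeeping: correctly matching the index set $\{i:(q-1)\mid j,\ i+j=n\}$ in Chen's formula with the complementary set $\{i:(q-1)\nmid(n-i)\}$ appearing in the statement, and confirming the sign and normalization conventions agree between \eqref{chen}, \eqref{shu}, and the cited theorem. Once that alignment is verified the proof is a one-line appeal to the cited rigidity theorem, so I would keep the write-up short: state the eliminated relation, cite \cite[Thm. 3.1.1]{2015arXiv151006519C}, and read off $a_i=0$.
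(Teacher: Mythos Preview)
Your approach is exactly the paper's: subtract Chen's identity \eqref{chen} from the \ref{shu}-relation to obtain a $k$-linear relation of the shape $b\zeta_A(n)+\sum_i \widetilde a_i\zeta_A(i,n-i)=0$, then invoke \cite[Thm.~3.1.1]{2015arXiv151006519C} to force the coefficients with $(q-1)\nmid(n-i)$ to vanish, which equals $a_i$ since Chen's formula contributes nothing at those indices. The only slip is in your first display of $\widetilde a_i$, where you wrote ``$(q-1)\nmid i$'' but meant ``$(q-1)\nmid(n-i)$''; your next paragraph already has it right.
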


\section{An effective criterion for the $\mathbb{F}_q[t]$-torsion property of $M_\mathfrak{C}$ in $\Ext_{\mathscr{F}}^1(\mathbf{1},M')$}\label{eff}
In this section, we will provide an effective criterion whether $M_\mathfrak{C}$ is $\mathbb{F}_q[t]$-torsion in $\Ext_{\mathscr{F}}^1(\mathbf{1},M').$ We follow the same idea in \cite[\S 5, \S 6]{2014arXiv1411.0124C}.

\subsection{Anderson $t$-modules}

Let $\tau :\mathbb{C}_\infty\to\mathbb{C}_\infty$ be the $q$th power operator defined by $x\mapsto x^q$ and let $\mathbb{C}_\infty[\tau]$ be the twisted polynomial ring in $\tau$
over $\mathbb{C}_\infty$ subject to the relation $\tau \alpha=\alpha^q\tau$ for $\alpha\in\mathbb{C}_\infty$. We define $t$-modules as follows.

\begin{defi}[\cite{A86}]
Let $d\in\mathbb{N}$ be given, a \textit{$d$-dimensional $t$-module} is a pair $(E,\phi)$, where $E$ is the $d$-dimensional algebraic group $\mathbb{G}^d_a$ and $\phi$ is an $\mathbb{F}_q[t]$-linear ring homomorphism \[\phi:\mathbb{F}_q[t]\to \Mat_d(\mathbb{C}_\infty[\tau])\]so that the image of $t$, denoted by $\phi_t$, is of the form $\alpha_0+\sum_i\alpha_i\tau^i$ with $\alpha_i\in \Mat_d(\mathbb{C}_\infty)$, and $\alpha_0-\theta I_d$ is a nilpotent matrix.
\end{defi}

\begin{rmk}
$E(\mathbb{C}_\infty)$ is equipped with an $\mathbb{F}_q[t]$-module structure via the map $\phi$.
\end{rmk}
For a subring $R$ of $\mathbb{C}_\infty$ containing $A$, we say that the $t$-module $E$ is \textit{defined over $R$} if $\alpha_i$ lies in $\Mat_d(R)$ for all $i\geq 0$.

We take the $n$th tensor power of the Carlitz $\mathbb{F}_q[t]$-module as an example. Fixing a positive integer $n$, the $n$th tensor power of the Carlitz $\mathbb{F}_q[t]$-module denoted by $\mathbf{C}^{\otimes n}$ is an $n$-dimensional $t$-module defined over $A$ together with the $\mathbb{F}_q$-linear ring homomorphism \[[\cdot]_n:\mathbb{F}_q[t]\to \Mat_n(\mathbb{C}_\infty[\tau])\] given by \[[t]_n=\theta I_n+N_n+E_n\tau,\] where\[N_n:=\begin{pmatrix}
 0& 1& \dots& 0\\ 
 \vdots& \ddots& \ddots& \vdots\\
 \vdots& & \ddots& 1\\
 0& \dots& \dots& 0\\
\end{pmatrix},~E_n:=\begin{pmatrix}
 0& \dots& \dots& 0\\ 
 \vdots& & & \vdots\\
 \vdots& & & \vdots\\
 1& \dots& \dots& 0\\
\end{pmatrix}.\]

\subsection{Identification of $\Ext_{\mathscr{F}}^1(\mathbf{1},M')$ and the Anderson $t$-module $E'$}

By an \textit{Anderson $t$-motive} we mean an object $N$ in $\mathscr{F}$ satisfying the following properties.
\begin{enumerate}[(1)]
\item $N$ is a free left $\bar{k}[\sigma]$-module of finite rank.
\item $(t-\theta)^nN\subseteq \sigma N$ for all sufficiently large integers $n$.
\end{enumerate}

\begin{rmk}
We can check directly that $M'$ is an Anderson $t$-motive.
\end{rmk}

Since $M'$ is an Anderson $t$-motive, we can construct an associated Anderson $t$-module $(E',\rho)$ and have the following $\mathbb{F}_q[t]$-module isomorphisms established by Anderson
\[\Ext_{\mathscr{F}}^1(\mathbf{1},M')\cong M'/(\sigma-1)M'\cong E'(\bar{k}).\] We state the details in the Theorem \ref{iso1} and Theorem \ref{iso2} which appeared in \cite{2014arXiv1411.0124C}.

\begin{thm}[{\cite[Thm. 5.2.1]{2014arXiv1411.0124C}}]\label{iso1}
Let $\{ x_0,\dots,x_{n-1}\}$ be a $\bar{k}[t]$-basis of $M'$ on which the $\sigma$-action is presented by the matrix $\Phi'$. Let $M\in\Ext_{\mathscr{F}}^1(\mathbf{1},M')$ be defined by the matrix \[\begin{pmatrix}
 \Phi'& 0\\ 
 f_0,\dots,f_{n-1}& 1\\
\end{pmatrix}.\] Then the map \[\mu:\Ext_{\mathscr{F}}^1(\mathbf{1},M')\to M'/(\sigma-1)M'\] defined by \[\mu(M):=f_0x_0+\cdots+f_{n-1}x_{n-1}\] is an isomorphism of $\mathbb{F}_q[t]$-modules.
\end{thm}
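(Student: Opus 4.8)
The plan is to verify that $\mu$ is a well-defined $\mathbb{F}_q[t]$-module homomorphism, then construct an explicit inverse. First I would check that $\mu$ does not depend on the choice of matrix representing the class of $M$ in $\Ext_{\mathscr{F}}^1(\mathbf{1},M')$. Two matrices $\begin{pmatrix}\Phi' & 0\\ \mathbf{f}& 1\end{pmatrix}$ and $\begin{pmatrix}\Phi' & 0\\ \mathbf{g}& 1\end{pmatrix}$ define isomorphic extensions precisely when there is a commutative square realizing the isomorphism by a matrix of the form $\begin{pmatrix}I_n & 0\\ \mathbf{h}& 1\end{pmatrix}$ with $\mathbf{h}\in\Mat_{1\times n}(\bar{k}[t])$; working out the $\sigma$-equivariance condition $\begin{pmatrix}I_n & 0\\ \mathbf{h}^{(-1)}& 1\end{pmatrix}\begin{pmatrix}\Phi' & 0\\ \mathbf{g}& 1\end{pmatrix} = \begin{pmatrix}\Phi' & 0\\ \mathbf{f}& 1\end{pmatrix}\begin{pmatrix}I_n & 0\\ \mathbf{h}& 1\end{pmatrix}$ yields $\mathbf{g}-\mathbf{f} = \mathbf{h}\Phi' - \mathbf{h}^{(-1)}$ in the row-vector presentation, which is exactly the statement that $\sum(g_i-f_i)x_i$ lies in $(\sigma-1)M'$ once one recalls $\sigma x_i = \sum_j (\Phi')_{ij} x_j$ and $\sigma(\sum h_i x_i) = \sum h_i^{(-1)}\sigma x_i$. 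Hence $\mu$ is well-defined on classes. Additivity under Baer sum and compatibility with the $\mathbb{F}_q[t]$-action are then immediate from the definitions recalled in \S\ref{Ext}: Baer sum adds the bottom rows, the $\mathbb{F}_q[t]$-action multiplies the bottom row by $a$, and on the target side $M'/(\sigma-1)M'$ is an $\mathbb{F}_q[t]$-module with $t$ acting by multiplication, so $\mu(a\ast M) = \sum (af_i) x_i = a\cdot\mu(M)$.

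Next I would construct the inverse map. Given any element $m = \sum_{i=0}^{n-1} f_i x_i \in M'$ with $f_i\in\bar{k}[t]$, send it to the class of the Frobenius module defined by $\begin{pmatrix}\Phi' & 0\\ f_0,\dots,f_{n-1}& 1\end{pmatrix}$; by the well-definedness computation above, if $m$ and $m'$ differ by an element of $(\sigma-1)M'$ then the two matrices define the same class, so this descends to a map $M'/(\sigma-1)M'\to\Ext_{\mathscr{F}}^1(\mathbf{1},M')$. That it is a two-sided inverse of $\mu$ is clear: composing one way returns $\sum f_i x_i$ from the matrix with bottom row $(f_0,\dots,f_{n-1})$, and the other way recovers the matrix from its bottom row. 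Surjectivity of $\mu$ follows because every class in $\Ext_{\mathscr{F}}^1(\mathbf{1},M')$ is represented by some such lower-triangular matrix (this is the definition in \S\ref{Ext}), and injectivity follows from the well-definedness argument run in reverse: if $\mu(M) \in (\sigma-1)M'$ then the matrix is equivalent to $\begin{pmatrix}\Phi' & 0\\ 0& 1\end{pmatrix}$, the split extension.

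The main obstacle I anticipate is purely bookkeeping rather than conceptual: getting the Frobenius-twist indices and the transpose/row-versus-column conventions exactly right in the identity $\mathbf{g}-\mathbf{f} = \mathbf{h}\Phi' - \mathbf{h}^{(-1)}$, since the $\sigma$-action on a $\bar{k}[t]$-linear combination $\sum h_i x_i$ mixes the twist on the coefficients $h_i\mapsto h_i^{(-1)}$ with the matrix $\Phi'$ governing $\sigma x_i$, and the noncommutativity of $\bar{k}[t,\sigma]$ must be tracked carefully. Everything else is formal manipulation with $2\times 2$ block matrices in lower-triangular form. Since this result and its proof are quoted verbatim from \cite[Thm. 5.2.1]{2014arXiv1411.0124C}, I would present the argument compactly and refer the reader there for the full verification, as the paper does for the analogous identification in the appendix.
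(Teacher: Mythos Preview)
The paper does not supply its own proof of this theorem; it is stated with a citation to \cite[Thm.~5.2.1]{2014arXiv1411.0124C} and used as a black box. Your proposal reconstructs the standard argument from that reference, and the outline is correct: well-definedness reduces to checking that the difference of bottom rows of two equivalent matrices lies in $(\sigma-1)M'$, $\mathbb{F}_q[t]$-linearity is immediate from the description of Baer sum and the $a\ast$-action in \S\ref{Ext}, and the obvious inverse map shows bijectivity.

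Two small points worth tightening. First, the formula you wrote, $\mathbf{g}-\mathbf{f}=\mathbf{h}\Phi'-\mathbf{h}^{(-1)}$, does not match the matrix identity you set up: multiplying out your displayed equation gives $\mathbf{g}-\mathbf{f}=\mathbf{h}-\mathbf{h}^{(-1)}\Phi'$, while $(\sigma-1)\!\left(\sum h_ix_i\right)$ has coefficient vector $\mathbf{h}^{(-1)}\Phi'-\mathbf{h}$; these agree up to replacing $\mathbf{h}$ by $-\mathbf{h}$, so the conclusion is unaffected, but the twist lands on the $\Phi'$ side, not the bare $\mathbf{h}$ side. You already flagged this as the likely bookkeeping hazard. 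Second, your claim that every class is represented by a lower-triangular block matrix is not literally ``the definition in \S\ref{Ext}''; it follows because any extension $0\to M'\to M\to\mathbf{1}\to 0$ in $\mathscr{F}$ splits over $\bar{k}[t]$ (everything is free over the PID $\bar{k}[t]$), so one can extend the basis $\{x_0,\dots,x_{n-1}\}$ of $M'$ by a lift of the generator of $\mathbf{1}$ and read off the block form. With these adjustments your argument is complete and matches the proof in the cited reference.
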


We consider the $n$th tensor power of the Carlitz motive $\mathcal{C}^{\otimes n}\in\mathscr{F}$. The underlying $\bar{k}[t]$-module of $\mathcal{C}^{\otimes n}$ is $\bar{k}[t]$ subject to the $\sigma$-action \[\sigma(f):=(t-\theta)^n f^{(-1)},~f\in\mathcal{C}^{\otimes n}.\] Note that $M'$ fits into the short exact sequence of Frobenious modules \[\begin{tikzcd}
0\arrow[r]
&\mathcal{C}^{\otimes n}\arrow{r}
&M'\arrow[r]
&\bigoplus _{i=1}^{n-1}\mathcal{C}^{\otimes (n-i)}\arrow[r]
&0
\end{tikzcd}
,\] where the projection map is defined by $\sum_{i=0}^{n-1}f_ix_i\mapsto (f_1,\dots,f_{n-1}).$ As a left $\bar{k}[\sigma]$-module, $\mathcal{C}^{\otimes j}$ is free of rank $j$ with the natural basis $\{(t-\theta)^{j-1},\dots,(t-\theta),1\}$. Hence the set \[\left\{ (t-\theta)^{n-1}x_0,\dots,(t-\theta)x_0,x_0,\dots,(t-\theta)x_{n-2},x_{n-2},x_{n-1}\right\},\] denoted by $\{\nu_1,\dots,\nu_d\}$, is a $\bar{k}[\sigma]$-basis of $M'$.

Define the homomorphism of $\mathbb{F}_q$-vector spaces $\Delta :M'\to\Mat_{d\times 1}(\bar{k})$ by \[m=\sum_{i=1}^du_i\nu_i\mapsto\Delta (m):=\begin{pmatrix}
 \delta (u_1)\\
\vdots\\
\delta (u_d)
\end{pmatrix},\] where \[\delta \left(\sum_i\sigma^ic_i^{q^i}\right)=\sum_ic_i^{q^i}.\] We note that the homomorphism $\Delta$ is surjective since \[\Delta (a_1\nu_1+\cdots+a_d\nu_d)=\begin{pmatrix}
a_1\\ 
\vdots\\
a_d
\end{pmatrix}\] for $(a_1,\dots,a_d)^{tr}\in \Mat_{d\times 1}(\bar{k})$. As $t(\sigma-1)M'\subseteq (\sigma-1)M'$, the map $\Delta$ induces an $\mathbb{F}_q[t]$-module structure on $\Mat_{d\times 1}(\bar{k}).$ We denote by $(E',\rho)$ the $t$-module defined over $\bar{k}$ with $E'(\bar{k})$ identified with $\Mat_{d\times 1}(\bar{k})$ on which the $\mathbb{F}_q[t]$-module structure is given by \[\rho:\mathbb{F}_q[t]\to\Mat_d(\bar{k}[\tau])\] so that \[\Delta\left(t(a_1\nu_1+\cdots+a_d\nu_d)\right)=\rho_t\begin{pmatrix}
a_1\\ 
\vdots\\
a_d
\end{pmatrix}.\]

\begin{thm}[{\cite[Thm. 5.2.3]{2014arXiv1411.0124C}}]\label{iso2}
Let $M'$ be the Frobenius module defined by the matrix $\Phi'$. Let $(E',\rho)$ be the $t$-module whose $\bar{k}$-valued points are $E'(\bar{k})$ identified with $\Mat_{d\times 1}(\bar{k})$, which is equipped with the $\mathbb{F}_q[t]$-module structure via $\rho : \mathbb{F}_q[t]\to\Mat_d(\bar{k}[t])$ through the map $\Delta$ as above. Then we have the following isomorphism of $\mathbb{F}_q[t]$-modules \[M'/(\sigma-1)M'\cong E'(\bar{k}).\] 
\end{thm}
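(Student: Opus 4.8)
The plan is to prove Theorem \ref{iso2} by showing that the $\mathbb{F}_q$-linear surjection $\Delta : M' \to \Mat_{d\times 1}(\bar{k})$ descends to an isomorphism on the quotient $M'/(\sigma-1)M'$; the bulk of the work is identifying $\ker\Delta$ with $(\sigma-1)M'$, and then checking that the induced $\mathbb{F}_q[t]$-action is exactly $\rho_t$. First I would verify the inclusion $(\sigma-1)M' \subseteq \ker\Delta$: since $\{\nu_1,\dots,\nu_d\}$ is a $\bar{k}[\sigma]$-basis and $\delta$ annihilates everything in $\sigma\bar{k}[\sigma]$, we have $\delta(\sigma u) = 0$ for any $u$, and a short computation shows $\Delta((\sigma-1)m) = \Delta(\sigma m) - \Delta(m)$, where the first term contributes only via the $\sigma$-free parts picked up after re-expanding $\sigma\nu_i$ in the basis. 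Concretely $\sigma\nu_i = (t-\theta)\cdot(\text{shift})$ within each Carlitz block, so $\sigma m$ has no $\sigma$-free coordinate except those forced by the matrix $\Phi'$; arranging this carefully gives $\Delta(\sigma m) = \Delta(m)$ for the relevant decomposition, hence $\Delta((\sigma-1)m)=0$.

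For the reverse inclusion $\ker\Delta \subseteq (\sigma-1)M'$, the key point is a dimension/structure count: any $m = \sum u_i\nu_i \in M'$ can be reduced modulo $(\sigma-1)M'$ to an element whose coordinates $u_i$ lie in $\bar{k}$ (no higher $\sigma$-powers), because $\sigma^j c^{q^j}\nu_i \equiv c^{q^j}\sigma^{j-1}(\text{basis elt}) \pmod{(\sigma-1)M'}$ lets one strip off one power of $\sigma$ at a time, using that $\sigma$ acting on the $\bar{k}[\sigma]$-basis lands back in the $\bar{k}[\sigma]$-span via $\Phi'$. Once $m$ is in this reduced form, $\Delta(m) = 0$ forces each $u_i = 0$, so $m \in (\sigma-1)M'$. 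This establishes $\ker\Delta = (\sigma-1)M'$ and hence the $\mathbb{F}_q$-vector space isomorphism $M'/(\sigma-1)M' \cong \Mat_{d\times 1}(\bar{k}) = E'(\bar{k})$.

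Finally I would check compatibility with the $\mathbb{F}_q[t]$-structures. Since $t(\sigma-1)M' \subseteq (\sigma-1)M'$, multiplication by $t$ on $M'$ descends to the quotient; transporting this through $\Delta$ gives precisely the map $\rho_t$ by the \emph{definition} of $\rho_t$ in the statement, namely $\rho_t \Delta(m) = \Delta(tm)$. So there is essentially nothing to verify here beyond observing that $\rho$ is well-defined as an $\mathbb{F}_q$-algebra homomorphism into $\Mat_d(\bar{k}[\tau])$ — that the entries really are twisted polynomials in $\tau$ and that $\rho_t - \theta I_d$ is nilpotent — which follows from writing out the action of $t$ on each basis vector $\nu_i$ using $\Phi'$ and the relation $\sigma f = f^{(-1)}\sigma$, so that multiplication by $t$ introduces $\theta$ on the diagonal plus $\tau$-terms coming from the entries of $\Phi'$ evaluated via the twist. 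The main obstacle is the careful bookkeeping in the reverse inclusion: one must track how $\sigma$ permutes and scales the block basis $\{(t-\theta)^{n-i-1}x_i,\dots,x_i\}$ and confirm that the reduction process modulo $(\sigma-1)M'$ terminates with coordinates in $\bar{k}$ and no hidden relations — but since $M'$ is an Anderson $t$-motive of the explicit triangular shape \eqref{phi}, this is the same computation carried out in \cite[\S 5.2]{2014arXiv1411.0124C}, and I would follow that argument essentially verbatim, which is why the full details are deferred to the appendix.
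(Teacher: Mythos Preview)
The paper does not actually give its own proof of Theorem~\ref{iso2}; it is stated as a citation of \cite[Thm.~5.2.3]{2014arXiv1411.0124C} and used as a black box. So there is no in-paper argument to compare against, only the standard Anderson argument from the cited reference.

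Your outline is correct and is essentially that standard argument, but your explanation of the inclusion $(\sigma-1)M'\subseteq\ker\Delta$ is more complicated than it needs to be and slightly misleading. Because $\{\nu_1,\dots,\nu_d\}$ is a $\bar{k}[\sigma]$-basis, for $m=\sum_i u_i\nu_i$ with $u_i\in\bar{k}[\sigma]$ one has $\sigma m=\sum_i(\sigma u_i)\nu_i$ directly; there is no need to ``re-expand $\sigma\nu_i$ in the basis'' or invoke $\Phi'$. The point is purely that $\delta(\sigma u)=\delta(u)$ for every $u\in\bar{k}[\sigma]$, which is immediate from the definition of $\delta$. Likewise the reverse inclusion is cleaner than your reduction procedure suggests: since $\ker(\delta:\bar{k}[\sigma]\to\bar{k})=(\sigma-1)\bar{k}[\sigma]$ (an elementary telescoping computation), freeness of $M'$ over $\bar{k}[\sigma]$ gives $\ker\Delta=\bigoplus_i(\sigma-1)\bar{k}[\sigma]\,\nu_i=(\sigma-1)M'$ at once. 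Your remarks on the $\mathbb{F}_q[t]$-compatibility and on what remains to check for $(E',\rho)$ to be a $t$-module are accurate.
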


For example, we consider the $n$th tensor power of Carlitz motive $\mathcal{C}^{\otimes n}$. As a left $\bar{k}[\sigma]$-module, $\mathcal{C}^{\otimes n}$ is free of rank $n$ with basis $\{(t-\theta)^{n-1},\dots,(t-\theta),1\}$. We let \[\Delta_n:\mathcal{C}^{\otimes n}\to \Mat_{n\times 1}(\bar{k})\] be defined as above with respect to this basis. For $(a_1,\dots,a_n)^{\tr}\in\Mat_{n\times 1}(\bar{k})$, we let \[f=a_1(t-\theta)^{n-1}+\cdots+a_{n-1}(t-\theta)+a_n,\] so that $\Delta_n(f)=(a_1,\dots,a_n)^{\tr}$. We can check directly that the multiplication by $t$ on $\Mat_{n\times 1}(\bar{k})$ is given by \[t\cdot
\begin{pmatrix}
a_1\\ 
\vdots\\
a_n
\end{pmatrix}=\Delta_n(tf)=[t]_n
\begin{pmatrix}
a_1\\ 
\vdots\\
a_n
\end{pmatrix}.\] Hence we have the identification \[\mathcal{C}^{\otimes n}/(\sigma-1)\mathcal{C}^{\otimes n}\simeq\mathbf{C}^{\otimes n}(\bar{k}).\]

\subsection{A criterion for the $\mathbb{F}_q[t]$-torsion property of $M_\mathfrak{C}$ in $\Ext_{\mathscr{F}}^1(\mathbf{1},M')$}

In this section, we provide a criterion for the $\mathbb{F}_q[t]$-torsion property of $M_\mathfrak{C}$ in $\Ext_{\mathscr{F}}^1(\mathbf{1},M')$. The strategy of proof is to follow \cite[Theorem 6.1.1]{2014arXiv1411.0124C}, and so we put most of the proofs\ in the Appendix \ref{pf56}.

\begin{thm}\label{criterion}
Let $(E',\rho)$ be the associated $t$-module given above. Given $\mathfrak{C}\in k^n$, and let $\mathbf{v}_\mathfrak{C}$ be the integral point in $E'(A)$ corresponding to $M_\mathfrak{C}$ via isomorphisms described in the Theorem \ref{iso1} and Theorem \ref{iso2}. For $(q-1)\mid i$, we decompose \[i=p^{\ell_i}n_i(q^{h_i}-1)\] such that $p\nmid n_i$ and $h_i$ is the greatest integer for which $(q^{h_i}-1)\mid i$. Put\[a=\prod(t^{q^{h_i}}-t)^{p^{\ell_i}},\] where the product is taken over integers $i$ from $1$ to $n$ which are multiples of $(q-1)$. Then $M_\mathfrak{C}$ is an $\mathbb{F}_q[t]$-torsion class in $\Ext_{\mathscr{F}}^1(\mathbf{1},M')$ if and only if $\rho_a(\mathbf{v}_\mathfrak{C})=0.$
\end{thm}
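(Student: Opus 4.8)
The plan is to reduce the $\mathbb{F}_q[t]$-torsion question for $M_\mathfrak{C}$ in $\Ext_{\mathscr{F}}^1(\mathbf{1},M')$ to a concrete divisibility statement in the $t$-module $E'$, using the isomorphisms of Theorem~\ref{iso1} and Theorem~\ref{iso2}. Under those identifications, $M_\mathfrak{C}$ corresponds to the point $\mathbf{v}_\mathfrak{C}\in E'(A)$, and multiplication by $c\in\mathbb{F}_q[t]$ on the $\Ext$-group matches the $t$-module action $\rho_c$. Hence $M_\mathfrak{C}$ is $\mathbb{F}_q[t]$-torsion precisely when $\rho_c(\mathbf{v}_\mathfrak{C})=0$ for some nonzero $c\in\mathbb{F}_q[t]$, and the task becomes to show that the specific polynomial $a=\prod(t^{q^{h_i}}-t)^{p^{\ell_i}}$ is, in an appropriate sense, the ``universal annihilator'': if $\mathbf{v}_\mathfrak{C}$ is killed by \emph{any} nonzero polynomial then it is killed by $a$. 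First I would record that $a\neq 0$, so one direction ($\rho_a(\mathbf{v}_\mathfrak{C})=0\Rightarrow$ torsion) is immediate.

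For the forward direction, the key structural input is the short exact sequence $0\to\mathcal{C}^{\otimes n}\to M'\to\bigoplus_{i=1}^{n-1}\mathcal{C}^{\otimes(n-i)}\to 0$ of Frobenius modules, which after applying the Anderson functor gives a corresponding exact sequence of $t$-modules relating $E'$ to $\mathbf{C}^{\otimes n}$ and the $\mathbf{C}^{\otimes(n-i)}$. I would then use the fact (following \cite[\S6]{2014arXiv1411.0124C}) that a torsion point of $\mathbf{C}^{\otimes m}$ with coordinates in $A$ must actually be a torsion point for the Carlitz action, and that the torsion submodule of $\mathbf{C}^{\otimes m}(A)$ is annihilated by a power of $(t^{q^h}-t)$ where $q^h-1$ is the largest such quantity dividing $m$; the precise exponent $p^\ell$ comes from writing $m=p^\ell n'(q^h-1)$ with $p\nmid n'$ and tracking the $p$-part. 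Projecting $\mathbf{v}_\mathfrak{C}$ down to each quotient $\mathbf{C}^{\otimes(n-i)}(A)$, the image is torsion (since $\mathbf{v}_\mathfrak{C}$ is), hence is killed by the relevant factor $(t^{q^{h_{n-i}}}-t)^{p^{\ell_{n-i}}}$ of $a$, which lands the partial multiple $\rho_{a'}(\mathbf{v}_\mathfrak{C})$ in the sub-$t$-module corresponding to $\mathcal{C}^{\otimes n}(A)$; applying the remaining factor $(t^{q^{h_n}}-t)^{p^{\ell_n}}$ then kills it there. Note that for $i$ with $(q-1)\nmid(n-i)$ the $(n-i)$-th tensor power has no nontrivial $A$-rational torsion, consistent with the necessary condition $a_i=0$ in \S\ref{fur}, so only the factors indexed by multiples of $q-1$ are needed.

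The main obstacle I anticipate is pinning down exactly which power of $t^{q^h}-t$ annihilates the $A$-rational torsion of $\mathbf{C}^{\otimes m}$ and verifying that this is uniform over all $\mathbf{v}_\mathfrak{C}$ — that is, that the exponent depends only on $m$ (through $p^{\ell_m}$) and not on the particular point. This requires a careful analysis of the $\mathbb{F}_q[t]$-module structure of $\mathbf{C}^{\otimes m}(A)$: its torsion is a finite $\mathbb{F}_q[t]$-module, so annihilated by some polynomial, but one must identify the exponent sharply. The argument here mirrors \cite[Lem.~6.x]{2014arXiv1411.0124C}; I would either cite it directly or reproduce the computation, using that the Carlitz torsion of weight dividing $m$ is concentrated at the prime $(t^{q^h}-t)$ and that raising to the $p$-th power interacts with tensoring in the expected way. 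A secondary (but routine) point is checking that the isomorphisms of Theorems~\ref{iso1} and~\ref{iso2} are genuinely $\mathbb{F}_q[t]$-linear and carry integral classes to integral points, so that ``$\mathbf{v}_\mathfrak{C}\in E'(A)$'' is legitimate; this is where the hypothesis $\mathfrak{C}\in k^n$ together with the normalization by $\Gamma_\mathfrak{C}$ in Definition~\ref{ab} is used. Given the paper's stated intention to defer the full proof to Appendix~\ref{pf56}, I would present this as the skeleton and relegate the torsion-exponent lemma and the linear-algebra verifications there.
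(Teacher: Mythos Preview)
Your proposal is correct and follows essentially the same route as the paper's proof: both use the short exact sequence $0\to\mathbf{C}^{\otimes n}(\bar{k})\to E'(\bar{k})\to\bigoplus_{i=1}^{n-1}\mathbf{C}^{\otimes i}(\bar{k})\to 0$, project the integral torsion point $\mathbf{v}_\mathfrak{C}$ to the quotient, annihilate it there by the product of factors $(t^{q^{h_i}}-t)^{p^{\ell_i}}$ over $(q-1)\mid i$ (the paper cites \cite[Prop.~1.11.2]{AT90} and \cite[Lem.~5.1.3]{2014arXiv1411.0124C} for this), and then annihilate the resulting element of $\mathbf{C}^{\otimes n}(k)_{\tor}$ by the factor for $i=n$. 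The paper derives the exact sequence of $\mathbb{F}_q[t]$-modules via the Snake Lemma applied to $\sigma-1$, and the integrality of $\mathbf{v}_\mathfrak{C}$ and the $A$-definition of $E'$ that you flag as ``secondary'' are exactly what the paper establishes separately as Theorem~\ref{integrl} in the appendix.
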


\begin{proof} It is clear that $\rho_{a}(\mathbf{v}_\mathfrak{C})=0$ implies that $M_\mathfrak{C}$ is an $\mathbb{F}_q[t]$-torsion class in $\Ext_{\mathscr{F}}^1(\mathbf{1},M').$ Now we suppose that $M_\mathfrak{C}$ is an $\mathbb{F}_q[t]$-torsion class in $\Ext_{\mathscr{F}}^1(\mathbf{1},M')$.  First, we follow the method in \cite[P. 307]{2015arXiv151006519C} to derive the short exact sequence of $\mathbb{F}_q[t]$-modules
\[0\to \mathcal{C}^{\otimes n}/(\sigma-1)\mathcal{C}^{\otimes n}\to M'/(\sigma-1)M' \to \bigoplus _{i=1}^{n-1}\mathcal{C}^{\otimes (n-i)}/(\sigma-1)\mathcal{C}^{\otimes (n-i)} \to 0.\] Note that $M'$ fits into the short exact sequence of Frobenious modules \[\begin{tikzcd}
0\arrow[r]
&\mathcal{C}^{\otimes n}\arrow{r}
&M'\arrow[r]
&\bigoplus _{i=1}^{n-1}\mathcal{C}^{\otimes (n-i)}\arrow[r]
&0
\end{tikzcd}
,\] where the projection map is defined by $\sum_{i=0}^{n-1}f_ix_i\mapsto (f_1,\dots,f_{n-1}).$ Note also that the $\mathbb{F}_q[t]$-linear map $\sigma -1$ from $\bigoplus _{i=1}^{n-1}\mathcal{C}^{\otimes (n-i)}$ to itself is injective. By the Snake Lemma, we have our desired short exact sequence of $\mathbb{F}_q[t]$-modules.

By the Theorem \ref{iso2} and previous isomorphisms of $\mathbb{F}_q[t]$-modules, we have \[0\to\mathbf{C}^{\otimes n}(\bar{k})\to E'(\bar{k})\to\bigoplus _{i=1}^{n-1}\mathbf{C}^{\otimes i}(\bar{k})\to 0.\] Let $\pi$ denote the surjective map. By the Theorem \ref{integrl}, $\mathbf{v}_\mathfrak{C}$ is an integral point, and then so is $\pi(\mathbf{v}_\mathfrak{C}).$ In fact, we have $\pi(\mathbf{v}_\mathfrak{C})\in \bigoplus _{i=1}^{n-1}\mathbf{C}^{\otimes i}(k)_{\tor}$ since $M_\mathfrak{C}$ is $\mathbb{F}_q[t]$-torsion by our assumption. By \cite[Prop. 1.11.2]{AT90} and \cite[Lem. 5.1.3]{2014arXiv1411.0124C}, the polynomial \[b:=\prod_{\substack{i\in\{1,\dots,n\}\\ (q-1)\mid i}}(t^{q^{h_i}}-t)^{p^{\ell_i}}\in\mathbb{F}_q[t]\] annihilates $\pi(\mathbf{v}_\mathfrak{C})$. Hence $\rho_b(\mathbf{v}_\mathfrak{C})\in \ker \pi\cong\mathbf{C}^{\otimes n}(\bar{k}).$ By the Theorem \ref{integrl} again, $E'$ is defined over $A$. $\rho_b(\mathbf{v}_\mathfrak{C})$ is also an integral point. Hence $\rho_b(\mathbf{v}_\mathfrak{C})\in \mathbf{C}^{\otimes n}(k)_{\tor}$. By \cite[Prop. 1.11.2]{AT90} and \cite[Lem. 5.1.3]{2014arXiv1411.0124C} again, $\rho_b(\mathbf{v}_\mathfrak{C})$ is annihilated by \[(t^{q^{h_n}}-t)^{p^{\ell_n}}\in\mathbb{F}_q[t]\] if $(q-1)\mid n$, otherwise $\rho_b(\mathbf{v}_\mathfrak{C})=0.$ Therefore $\rho_a(\mathbf{v}_\mathfrak{C})=0$.\end{proof}

\section{Algorithm and computational results}\label{Algorithm}
\subsection{Algorithm}
In this section we provide an algorithm to determine, for the given $n$-tuple of coefficients $\mathfrak{C}=(b_0,a_1,\dots,a_{n-1})\in k^n$, whether $M_\mathfrak{C}$ is $\mathbb{F}_q[t]$-torsion in $\Ext_{\mathscr{F}}^1(\mathbf{1},M')$ or not. (cf. \cite[\S 6]{2014arXiv1411.0124C})

\begin{enumerate}[STEP 1.]
\item[INPUT:] $r,s\in\mathbb{N}$, $n:=r+s$, $p:$ a prime, $q$: a power of $p$, $\mathfrak{C}=(b_0,a_1,\dots,a_{n-1})\in k^n.$

\item Compute the Anderson-Thakur polynomials $H_0,\dots,H_{n-1}$ and the polynomial $a$ as in the Theorem \ref{criterion}.

\item Let $M'$ be the Frobenius module defined by $\Phi'$ as in \eqref{phi} with $\bar{k}[t]$-basis $\{x_0,\dots,x_{n-1}\}$. Let $\{\nu_1,\dots,\nu_d\}$ be the $\bar{k}[\sigma]$-basis of $M'$ given by \[\left\{ (t-\theta)^{n-1}x_0,\dots,(t-\theta)x_0,x_0,\dots,(t-\theta)x_{n-2},x_{n-2},x_{n-1}\right\}.\] Identify $M'/(\sigma-1)M'$ with $\Mat_{d\times 1}(\bar{k})$ via $\{\nu_1,\dots,\nu_d\}.$

\item  Compute $\beta_0$, $\alpha_i$ for $i=1,\dots,n-1$ as in Definition \ref{ab}. Consider \[\left[ \beta_0H^{(-1)}_{n-1}(t-\theta)^n-\gamma_0(H^{(-1)}_{r-1}H^{(-1)}_{s-1}(t-\theta)^{n})\right]x_0+\sum_{i=1}^{n-1}\left(\alpha_iH^{(-1)}_{(n-i)-1}(t-\theta)^{n-i}\right)x_i\]in $M'/(\sigma-1)M'$ and multiply it by the polynomial $a$. Write it as the form\[\sum_{i=1}^du_i\nu_i\] which corresponds to the integral point $\rho_a(\mathbf{v}_\mathfrak{C})=(\delta(u_1),\dots,\delta(u_d))^{tr}\in E'(A)$ via the $\Delta$ map described in the Theorem \ref{iso2}.

\item[OUTPUT:] If $\rho_a(\mathbf{v}_\mathfrak{C})$ is zero, then $M_\mathfrak{C}$ is an $\mathbb{F}_q[t]$-torsion class in $\Ext_{\mathscr{F}}^1(\mathbf{1},M')$; otherwise, $M_\mathfrak{C}$ is not an $\mathbb{F}_q[t]$-torsion class in $\Ext_{\mathscr{F}}^1(\mathbf{1},M').$
\end{enumerate}

\subsection{Examples}
We use Maple to write a program based on the algorithm. By a ``doable'' $n$-tuple $\mathfrak{C}=(b_0,a_1,\dots,a_{n-1})\in k^n$, we mean the computation for determining whether $M_\mathfrak{C}$ is $\mathbb{F}_q[t]$-torsion in $\Ext_{\mathscr{F}}^1(\mathbf{1},M')$ or not can be done within about $10$ minutes.

We recheck Chen's formula \eqref{chen} and search for $\mathfrak{C}\in k^n-\mathbb{F}_p^n$ with the \ref{shu}-property by the Maple program. 
\begin{enumerate} 
    \item For $p=q<30$, $r$, $s<200$, the computation shows that $M_\mathfrak{C}$ is an $\mathbb{F}_q[t]$-torsion class in $\Ext_{\mathscr{F}}^1(\mathbf{1},M')$ where $\mathfrak{C}\in \mathbb{F}_p^n$ is a ``doable'' $n$-tuple of coefficients coming from the Chen's formula \eqref{chen}. This matches the result of Theorem\ref{main}\eqref{main1}.
    
    \item Let $p=q=3$ be given. For $r$, $s<4$ such that $(q-1)\nmid n$, ``doable'' $n$-tuples $\mathfrak{C}=(b_0,a_1,\dots,a_{n-1})\in k^n-\mathbb{F}_p^n$ with degrees of numerators and denominators of $b_0$, $a_i$ less than $4$, we list some data below for $r$, $s$ and the $n$-tuples $\mathfrak{C}$ when $M_\mathfrak{C}$ is $\mathbb{F}_q[t]$-torsion in $\Ext_{\mathscr{F}}^1(\mathbf{1},M')$:

\begin{align*}
(r, s)=(1, 2), &~\mathfrak{C}=(2,\theta^3+2\theta,0)\in A^3-\mathbb{F}_3^3,\\
(r, s)=(1, 2), &~\mathfrak{C}=(0,2\theta^3+\theta,0)\in A^3-\mathbb{F}_3^3,\\
(r, s) = (2, 3), &~\mathfrak{C}=(0,2\theta^3+\theta,0,2,0)\in A^5-\mathbb{F}_3^5,\\
(r, s)=(1, 2), &~\mathfrak{C}=(\frac{\theta^3+2\theta+2}{\theta^3+2\theta},2,0)\in k^3-A^3,\\
(r, s)=(1, 2), &~\mathfrak{C}=(\frac{2\theta^3+\theta+2}{2\theta^3+\theta},1,0)\in k^3-A^3.\\
\end{align*}By Theorem \ref{main}\eqref{main22}, those $n$-tuples $\mathfrak{C}$ have the \ref{shu}-properties, \textit{i.e.}, we have the following shufle relations:

\begin{align*}
\zeta_A(1)\zeta_A(2)-\zeta_A(1,2)-\zeta_A(2,1)={}&2\zeta_A(3)+(\theta^3+2\theta)\zeta_A(1,2),\\
\zeta_A(1)\zeta_A(2)-\zeta_A(1,2)-\zeta_A(2,1)={}&(2\theta^3+\theta)\zeta_A(1,2),\\
\zeta_A(2)\zeta_A(3)-\zeta_A(2,3)-\zeta_A(3,2)={}&(2\theta^3+\theta)\zeta_A(1,4)+2\zeta_A(3,2),\\
\zeta_A(1)\zeta_A(2)-\zeta_A(1,2)-\zeta_A(2,1)={}&\frac{\theta^3+2\theta+2}{\theta^3+2\theta}\zeta_A(3)+2\zeta_A(1,2),\\
\zeta_A(1)\zeta_A(2)-\zeta_A(1,2)-\zeta_A(2,1)={}&\frac{2\theta^3+\theta+2}{2\theta^3+\theta}\zeta_A(3)+\zeta_A(1,2).\\
\end{align*}
\end{enumerate}

\begin{rmk}
Fix $r$, $s\in \mathbb{N}$, $p$ a prime and $q$ a power of the prime $p$. By a naive analogue of the sum shuffle, we mean the $n$-tuples $\mathfrak{C}:=(1,0,\dots,0)\in\mathbb{F}_p^n$ has the \ref{shu}-property. Conjecturally all the $n$-tuples $\mathfrak{C}\in\mathbb{F}_p^n$ having the \ref{shu}-properties come from Chen's formula \eqref{chen}, which implies all the naive analogues of the sum shuffle come from Chen's formula.  From our data, the $n$-tuple $\mathfrak{C}\in\mathbb{F}_p^n$ we found with the \ref{shu}-property comes from Chen's formula \eqref{chen}, so our data do support the conjecture. 
\end{rmk}

\section{Another method}\label{Another}
\begin{defi}
Fixing $n$, an $n$-tuple $\mathfrak{C}=(b_0,a_1,\dots,a_{n-1})\in k^{n}$ is said to have the \textit{\ref{DR}-property} if it fits into the following linear relation among double zeta values and $\zeta_A(n)$:
\begin{equation}\label{DR}\tag{DR}
0=b_0\zeta_A(n)+\sum_{i=1}^{n-1}a_i\zeta_A(i,n-i). 
\end{equation}
\end{defi}

Fixing $r,s\in\mathbb{N}$ and letting $n:=r+s$, by combining a shuffle relation and the relation (\ref{chen}), we have that the $n$-tuple $\mathfrak{C}=(b_0,a_1,\dots,a_{n-1})\in k^{n}$ has the \ref{shu}-property if and only if the $n$-tuple $\widetilde{\mathfrak{C}}=(\widetilde{b}_0,\widetilde{a}_1,\dots,\widetilde{a}_{n-1})\in k^{n}$ has the \ref{DR}-property where \[\widetilde{b}_0:=b_0-1\] and \[\widetilde{a}_i:=\left\{\begin{matrix}
a_i-\left[ (-1)^{s-1}\begin{pmatrix}n-i-1\\s-1\end{pmatrix}+(-1)^{r-1}\begin{pmatrix}n-i-1\\r-1\end{pmatrix}\right] &\text{if } (q-1)\mid(n-i)\\ 
a_i & \text{otherwise}
\end{matrix}\right..\]

Note that finding all possible $n$-tuples in $k^{n}$ with the \ref{DR}-property is equivalent to finding all $n$-tuples in $A^{n}$ having the \ref{DR}-property. The crucial part is to relate the \ref{DR}-property to the $\mathbb{F}_q[t]$-linear relation among some elements in $\mathbf{C}^{\otimes n}(\bar{k})$. More precisely, put
\[\mathscr{V}:=\{(s_1,s_2)\in\mathbb{N}^2: s_1+s_2=n\text{ and } (q-1)\mid s_2\}.\] For convenience, we label $\mathscr{V}$ as \[\mathscr{V}=\{\mathfrak{s}_1,\dots,\mathfrak{s}_{|\mathscr{V}|}\} \] where $|\mathscr{V}|$ is the cardinality of the finite set $\mathscr{V}$. Considering points \[\{\mathbf{v}_n\}\cup\{\Xi_{\mathfrak{s}_i}\}_{i=1}^{|\mathscr{V}|}\subset\mathbf{C}^{\otimes n}(\bar{k})\] described in \cite[Thm. 2.3.1, Thm. 4.1.1]{2015arXiv151006519C}. We separate them into two cases:

\begin{enumerate}
\item[Case I.] $(q-1) \nmid n$:

We consider the $\mathbb{F}_q[t]$-linear relation:\[
[\eta]_n(\mathbf{v_n})+\sum_{i=1}^{|\mathscr{V}|}[\eta_i]_n(\Xi_{\mathfrak{s}_i})=0.\]
By the proof of \cite[Thm. 6.1.1]{2015arXiv151006519C}, we can effectively determine if the tuple of polynomials $(\eta,\eta_1,\dots,\eta_{|\mathscr{V}|}) \in\mathbb{F}_q[t]^{|\mathscr{V}|+1}$ satisfying the above equation. Then by the proof of \cite[Thm. 5.1.1]{2015arXiv151006519C}, we trace back to an $n$-tuple in $A^n$ having the \ref{DR}-property.

\item[Case II.] $(q-1) \mid n$:

In this case, we consider the $\mathbb{F}_q[t]$-linear relation:\[
\sum_{i=1}^{|\mathscr{V}|}[\eta_i]_n(\Xi_{\mathfrak{s}_i})=0.\]
By the proof of \cite[Thm. 6.1.1]{2015arXiv151006519C} again, we can effectively determine if the tuple of polynomials $(\eta_1,\dots,\eta_{|\mathscr{V}|}) \in\mathbb{F}_q[t]^{|\mathscr{V}|}$ satisfying the above equation. We want to use the proof of \cite[Thm. 5.1.1]{2015arXiv151006519C} to trace back to an $n$-tuple in $A^n$ with the \ref{DR}-property. Unfortunately, we can not determine the first coordinate of the $n$-tuple in $A^n$ with the \ref{DR}-property derived by this process although we know the other coordinates of the $n$-tuple in $A^n$.
\end{enumerate}

In conclusion, we can achieve the same result by the arguments provided in \cite{2015arXiv151006519C}. \textit{i.e.}, we also have an effective criterion to determine whether an $n$-tuple in $k^n$ has the \ref{shu}-property if $(q-1)\nmid n$. For the case $(q-1)\mid n$, we can not explicit determine the first coordinate of an $n$-tuple in $k^n$.

\appendix
\section{The crucial Theorem in proving the Theorem \ref{criterion}}\label{pf56}
\subsection{Two crucial properties}
To derive the Theorem \ref{criterion}, we follow the strategy in \cite[\S 5.3]{2014arXiv1411.0124C}. We need two important properties stated them below.

Via the isomorphisms \[\Ext_{\mathscr{F}}^1(\mathbf{1},M')\cong M'/(\sigma-1)M'\cong E'(\bar{k}),\] we denote the image of the class $M_\mathfrak{C}\in\Ext_{\mathscr{F}}^1(\mathbf{1},M')$ in $E'(\bar{k})$ by $\mathbf{v}_\mathfrak{C}$.

\begin{thm}\label{integrl}
We have that
\begin{enumerate}[(1)]
\item The associated $t$-module $E'$ given above is defined over $A$.
\item $\mathbf{v}_\mathfrak{C}$ is an integral point in $E'(A).$
\end{enumerate}
\end{thm}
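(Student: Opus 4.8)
The plan is to establish both claims by tracking the basis change between the $\bar{k}[t]$-presentation of $M'$ (via $\{x_0,\dots,x_{n-1}\}$ and $\Phi'$) and the $\bar{k}[\sigma]$-presentation (via $\{\nu_1,\dots,\nu_d\}$), and by exploiting the period interpretation of $\mathbf{v}_\mathfrak{C}$ coming from the rigid analytic trivialization $\psi_\mathfrak{C}$ constructed in the proof of Theorem~\ref{main}(\ref{main1}). For part~(1), I would follow \cite[\S 5.3]{2014arXiv1411.0124C} essentially verbatim: the matrix $\rho_t$ is computed by writing $t\nu_j = \theta\nu_j + (\text{correction terms})$ and then $\sigma$-reducing each product back into the span of $\{\nu_1,\dots,\nu_d\}$ using $\sigma x_i = \Phi'$-relations, i.e. $\sigma x_i = H_i^{(-1)}(t-\theta)^n x_0 + (t-\theta)^{n-i} x_i$ read off from \eqref{phi}. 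Since all entries of $\Phi'$ lie in $A[t]$ (the $H_i$ have coefficients in $A$, and $(t-\theta)$ is visibly in $A[t]$), every reduction step produces coefficients in $A$, so $\rho_t = \theta I_d + N + (\text{matrix in }\Mat_d(A))\tau$, giving $E'$ defined over $A$. This part is purely formal bookkeeping.

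For part~(2), the key is that $\mathbf{v}_\mathfrak{C}$ is, by the isomorphism $\mu$ of Theorem~\ref{iso1} composed with $\Delta$ of Theorem~\ref{iso2}, the image of the bottom row $({\Phi_\mathfrak{C}}_{(n+1),1},\alpha_1 H^{(-1)}_{(n-1)-1}(t-\theta)^{n-1},\dots,\alpha_{n-1}H^{(-1)}_{1-1}(t-\theta),0)$ acting on the basis: explicitly $\mu(M_\mathfrak{C}) = {\Phi_\mathfrak{C}}_{(n+1),1}\,x_0 + \sum_{i=1}^{n-1}\alpha_i H^{(-1)}_{(n-i)-1}(t-\theta)^{n-i}\,x_i \in M'/(\sigma-1)M'$. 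I would expand this element in the $\bar{k}[\sigma]$-basis $\{\nu_1,\dots,\nu_d\}$ by repeatedly using $(t-\theta)x_i$-type relations together with $\sigma$-reductions, and observe that the coefficients $\alpha_i$, $\beta_0$, $\gamma_0$ are by Definition~\ref{ab} polynomials in $\mathbb{F}_q[t]$, while $H_j^{(-1)}$ and $(t-\theta)$ have coefficients in $\bar{k}[t]$ that in fact become $A$-coefficients after the $q$-th-root twist is absorbed into the $\delta(\cdot)$ map $\delta(\sum_i \sigma^i c_i^{q^i}) = \sum_i c_i^{q^i}$. The upshot is that each $\delta(u_i)$ lands in $A$. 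One must be a little careful here: the twisted entries $H_j^{(-1)}$ live in $A^{1/q}[t]$, not $A[t]$, but the combination that survives after $\sigma$-reduction (and which the $\delta$-map records) reconstitutes elements of $A$, exactly as in \cite[Lem. 5.3.4--5.3.6]{2014arXiv1411.0124C}; this is the one place requiring genuine attention rather than mere transcription.

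Alternatively — and this is the route I would actually favor for cleanliness — I would deduce integrality from the fact that $\psi_\mathfrak{C}\in\Mat_{(n+1)\times 1}(\mathscr{E})$ is a rigid analytic trivialization with $\psi_\mathfrak{C}^{(-1)} = \Phi_\mathfrak{C}\psi_\mathfrak{C}$, and invoke the standard principle (Anderson; cf. \cite[\S 5.3]{2014arXiv1411.0124C}) that a class in $\Ext^1_{\mathscr{F}}(\mathbf{1},M')$ represented by an entire trivialization corresponds, under the identification with $E'(\bar k)$, to a point whose coordinates are obtained by evaluating certain entire functions at $t=\theta$; when the defining data of $M_\mathfrak{C}$ lies over $A$ (which it does, since $\alpha_i,\beta_0,\gamma_0\in\mathbb{F}_q[t]$ and $H_j\in A[t]$), these evaluations lie in $A$. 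The main obstacle, as noted, is the twist-versus-integrality point: showing that the $\frac{1}{q}$-powers introduced by $(-1)$-twisting are precisely cancelled in the $\nu$-basis expansion so that the final coordinates are genuinely in $A$ and not merely in $A^{1/q^m}$. Everything else is a direct adaptation of \cite{2014arXiv1411.0124C}, which is why the authors relegate it to the appendix.
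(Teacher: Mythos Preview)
Your treatment of part~(1) is essentially the paper's: both compute the $t$-action on the $\bar{k}[\sigma]$-basis $\{\nu_j\}$, reduce via the relations encoded in $\Phi'$, and observe that since $\Phi'\in\Mat_n(A[t])$ every coefficient produced lies in $A$. The paper packages this through an auxiliary set $\Xi = \{\sum e_i\nu_i : e_i = \sum_n \sigma^n u_{ni},\ u_{ni}\in A\}$ and a lemma (Proposition~\ref{lprop}) showing $A[t]\cdot x_\ell\subset\Xi$, but the content is what you describe.

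For part~(2), though, your first route correctly locates the obstacle but does not resolve it, and your second route is a wrong turn. The issue you flag --- that the bottom row of $\Phi_\mathfrak{C}$ involves $H_j^{(-1)}\in A^{1/q}[t]$ --- is \emph{not} handled by the $\delta$-map ``absorbing'' the twist; $\delta$ simply sums the $\sigma$-coefficients and will faithfully return $A^{1/q}$-elements if that is what it is fed. The actual mechanism is a one-step identity exploiting the match between the bottom row and the relations in $\Phi'$: since $\sigma x_0=(t-\theta)^n x_0$, $\sigma x_i = H_{i-1}^{(-1)}(t-\theta)^n x_0 + (t-\theta)^{n-i}x_i$, and $\alpha_i,\beta_0,\gamma_0\in\mathbb{F}_q[t]$ are $\sigma$-fixed, one has e.g.
\[
\alpha_i H_{(n-i)-1}^{(-1)}(t-\theta)^{n-i}x_i \;=\; \sigma\!\bigl(\alpha_i H_{(n-i)-1}\bigr)x_i \;-\; \sigma\!\bigl(\alpha_i H_{(n-i)-1}H_{i-1}\bigr)x_0,
\]
and similarly for the $x_0$-term. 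The entire representative of $\mu(M_\mathfrak{C})$ thus rewrites as a sum of terms $\sigma(g)\cdot x_j$ with $g\in A[t]$, to which Proposition~\ref{lprop} and $\sigma\Xi\subset\Xi$ apply. This untwisting is not a generic consequence of $\sigma$-reduction; it hinges on the bottom row of $\Phi_\mathfrak{C}$ having precisely the shape $f^{(-1)}\cdot(\text{column of }\Phi')$, which is the point you are missing.

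Your alternative via $\psi_\mathfrak{C}$ does not give $\mathbf{v}_\mathfrak{C}$. The vector $\psi_\mathfrak{C}(\theta)$ records \emph{periods} --- its last entry is $\Gamma_\mathfrak{C}\tilde\pi^{-n}$ times a combination of multizeta values, which is transcendental in general --- whereas $\mathbf{v}_\mathfrak{C}=\Delta(\mu(M_\mathfrak{C}))$ is a purely algebraic computation from the bottom row of $\Phi_\mathfrak{C}$. In Anderson's dictionary $\psi_\mathfrak{C}(\theta)$ is related to a logarithm of $\mathbf{v}_\mathfrak{C}$ under $\exp_{E'}$, not to the point itself, so evaluating entire functions at $\theta$ cannot establish integrality of $\mathbf{v}_\mathfrak{C}$.
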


In \cite[Thm. 5.3.2]{2014arXiv1411.0124C}, they constructed a special set $\Xi\in M'$ such that the image of $\Xi$ via $\Delta$ is obviously in $E'(A).$ Furthermore, they proved that the special point is an image of some element in $\Xi$ and then completed the proof. Here, we follow the same approach.

\begin{prop}\label{lprop}
Let $M'$ be the Frobenius module defined by the matrix $\Phi'$ in (\ref{phi}) with a $\bar{k}[t]$-basis $x_0,\dots,x_{n-1}.$ Let $\{\nu_1,\dots,\nu_d\}$ be the $\bar{k}[\sigma]$-basis of $M'$ given by \[\left\{ (t-\theta)^{n-1}x_0,\dots,(t-\theta)x_0,x_0,\dots,(t-\theta)x_{n-2},x_{n-2},x_{n-1}\right\}.\]
Let $\Xi$ be the set consisting of all elements in $M'$of the form $\sum_{i=1}^de_i\nu_i$, where $e_j=\sum_n\sigma^nu_{nj}$ with each $u_{nj}\in A$. Then for any nonzero $f\in A[t]$ and any $1\leq \ell\leq n-1$, we have $fx_\ell\in \Xi.$
\end{prop}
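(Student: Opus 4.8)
The plan is to prove the statement by two nested inductions on the $t$-degree, using essentially two structural facts. First, each $(t-\theta)^m$ is monic in $t$ with coefficients in $A$, so Euclidean division of an element of $A[t]$ by it produces quotient and remainder again in $A[t]$. Second, for $h\in\bar k[t]$ and $m\in M'$ one has $\sigma(h\,m)=h^{(-1)}\sigma(m)$, equivalently $h\cdot\sigma(m)=\sigma\bigl(h^{(1)}m\bigr)$, where the positive Frobenius twist $h\mapsto h^{(1)}$ maps $A[t]$ into itself (it raises the $\bar k$-coefficients to their $q$th power). I will also use that $\Xi$ is closed under subtraction and under left multiplication by $\sigma$, because $A[\sigma]$ (the set of finite sums $\sum_m\sigma^m u_m$ with $u_m\in A$) has these closure properties inside $\bar k[t,\sigma]$, and $\sigma\cdot\sum_i e_i\nu_i=\sum_i(\sigma e_i)\nu_i$.

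\emph{The $x_0$-case.} I would first settle the auxiliary statement that $p\,x_0\in\Xi$ for every $p\in A[t]$. Here $\bar k[t]x_0$ is the Frobenius submodule $\cong\mathcal{C}^{\otimes n}$ with $\sigma(x_0)=(t-\theta)^n x_0$, and $\{(t-\theta)^{n-1}x_0,\dots,(t-\theta)x_0,x_0\}$ is the initial segment of the basis $\{\nu_1,\dots,\nu_d\}$. Induct on $\deg_t p$: if $\deg_t p<n$, Taylor-expand $p=\sum_{l<n}c_l(t-\theta)^l$ at $t=\theta$, noting that the $c_l$ are hyperderivatives of $p$ evaluated at $\theta$ and hence lie in $A$, so $p\,x_0\in\Xi$; if $\deg_t p\ge n$, write $p=(t-\theta)^n h+r$ with $h,r\in A[t]$, $\deg_t r<n$, and then $p\,x_0=r\,x_0+h\cdot\sigma(x_0)=r\,x_0+\sigma\bigl(h^{(1)}x_0\bigr)$, where $r\,x_0\in\Xi$ by the base case and $h^{(1)}x_0\in\Xi$ by induction since $\deg_t h^{(1)}=\deg_t h<\deg_t p$.

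\emph{The general case.} Now fix $1\le\ell\le n-1$ and induct on $\deg_t f$. If $\deg_t f<n-\ell$, the Taylor-expansion argument above applies verbatim because $\{(t-\theta)^l x_\ell:0\le l\le n-\ell-1\}\subseteq\{\nu_1,\dots,\nu_d\}$. If $\deg_t f\ge n-\ell$, write $f=(t-\theta)^{n-\ell}g+r$ with $g,r\in A[t]$, $\deg_t r<n-\ell$ and $\deg_t g<\deg_t f$. Substituting $(t-\theta)^{n-\ell}x_\ell=\sigma(x_\ell)-H^{(-1)}_{\ell-1}(t-\theta)^n x_0$ (the defining relation for $\Phi'$) and moving $g$, resp.\ $gH^{(-1)}_{\ell-1}$, across $\sigma$ via fact two, one gets
\[
f\,x_\ell=\sigma\bigl(g^{(1)}x_\ell\bigr)-\sigma\bigl(g^{(1)}H_{\ell-1}\,x_0\bigr)+r\,x_\ell,
\]
using $\bigl(H^{(-1)}_{\ell-1}\bigr)^{(1)}=H_{\ell-1}\in A[t]$. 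Here $r\,x_\ell\in\Xi$ by the base case; $g^{(1)}x_\ell\in\Xi$ by the induction hypothesis, so $\sigma\bigl(g^{(1)}x_\ell\bigr)\in\Xi$; and $g^{(1)}H_{\ell-1}\in A[t]$, so $g^{(1)}H_{\ell-1}x_0\in\Xi$ by the $x_0$-case, hence $\sigma\bigl(g^{(1)}H_{\ell-1}x_0\bigr)\in\Xi$. Since $\Xi$ is closed under subtraction, $f\,x_\ell\in\Xi$.

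\emph{Where the difficulty lies.} The only genuinely delicate point is the ``overflow'' into the $x_0$-slot produced by the subdiagonal entries $H^{(-1)}_{\ell-1}(t-\theta)^n$ of $\Phi'$: since $H^{(-1)}_{\ell-1}\notin A[t]$ (it involves $\theta^{1/q}$), there is a priori no reason for the $\bar k[\sigma]$-expansion of $f\,x_\ell$ to have coefficients in $A[\sigma]$. What rescues the argument is precisely that this entry comes with the factor $(t-\theta)^n=\sigma(x_0)$ attached: absorbing that one power of $\sigma$ converts $H^{(-1)}_{\ell-1}$ into its twist $H_{\ell-1}$, which does lie in $A[t]$, thereby reducing the bad term to the already-settled $x_0$-case. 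This forces the restriction $\ell\ge 1$ as well (for $\ell=0$ the reduction is not available). Everything else is routine bookkeeping about $t$-degrees and the compatibility of Frobenius twisting with $A[t]$. This is the analogue for $M'$ of the construction in \cite[Thm.~5.3.2]{2014arXiv1411.0124C}, and the proof follows the same strategy.
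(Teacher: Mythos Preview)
Your proof is correct and follows essentially the same route as the paper: first handle the auxiliary $x_0$-case by induction on $\deg_t$, then for each $\ell\ge 1$ induct on $\deg_t f$, using the relation $(t-\theta)^{n-\ell}x_\ell=\sigma(x_\ell)-H^{(-1)}_{\ell-1}(t-\theta)^n x_0$ and the key observation that absorbing one $\sigma$ turns $H^{(-1)}_{\ell-1}$ into $H_{\ell-1}\in A[t]$, reducing to the $x_0$-case. Your write-up is in fact slightly cleaner than the paper's, which states an outer induction on $\ell$ that is never actually invoked and contains a subscript typo ($H_{(\ell-1)-1}$ in place of $H_{\ell-1}$).
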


\begin{proof}(cf. \cite[Thm. 5.3.2]{2014arXiv1411.0124C})
We first prove the case when $\ell=0$. We divide $f$ by $(t-\theta)^n$ and write \[f=g_1(t-\theta)^n+\gamma_1,\] where $g_1,\gamma_1\in A[t]$ with $\deg_t\gamma_1<n.$ So \[fx_0=g_1\sigma x_0+\gamma x_0=\sigma g_1^{(1)}x_0+\gamma_1 x_0.\] Note that by expanding $\gamma_1$ in terms of powers of $(t-\theta)$ we see that $\gamma_1x_0$ is an $A$-linear combination of $\{\nu_1,\dots,\nu_n\}$.

Next we divide $g_1^{(1)}\in A[t]$ by $(t-\theta)^n$ and write \[g_1^{(1)}=g_2(t-\theta)^n+\gamma_2,\] where $g_2,\gamma_2\in A[t]$ with $\deg_t\gamma_2<n.$ So \[\sigma g_1^{(1)}x_0=\sigma(g_2(t-\theta)^n+\gamma_2)x_0=\sigma^2g_1^{(1)}x_0+\sigma\gamma_2x_0.\] By expanding $\gamma_2$ in terms of $(t-\theta)$ we see that $\sigma\gamma_2x_0\in\Xi.$ By dividing $g_2^{(1)}$ by $(t-\theta)^n$ and continuing the procedure as above inductively we eventually obtain that $fx_0\in\Xi.$

Now for $\ell\geq 2$ we suppose that multiplication by any element of $A[t]$ on $x_i$ belongs to $\Xi$ for $1\leq i\leq \ell-1$. We prove that $fx_\ell\in\Xi$ by the induction on the degree of $f$ in $t$, and note that the result is valid when $\deg_tf\leq n-1-\ell$ by expanding $f$ in terms of powers of $(t-\theta).$ So we suppose that $\deg_tf\geq n-1-\ell+1.$

We divide $f$ by $(t-\theta)^{n-\ell}$ and write \[f=g_1(t-\theta)^{n-\ell}+\gamma_1,\] where $g_1,\gamma_1\in A[t]$ with $\deg_t\gamma_1<n-\ell.$ It follows that \begin{align*}
fx_\ell={}& g_1(t-\theta)^{n-\ell}x_\ell+\gamma_1x_\ell\\
={}& g_1\left[ \sigma x_\ell-H_{(\ell-1)-1}^{(-1)}(t-\theta)^nx_0\right]+\gamma_1x_\ell\\
={}& \sigma g_1^{(1)}\left( x_\ell-H_{(\ell-1)-1}x_0\right)+\gamma_1x_\ell\\
={}& \sigma g_1^{(1)}x_\ell-\sigma g_1^{(1)}H_{(\ell-1)-1}x_0+\gamma_1x_\ell.
\end{align*}
However, by expanding $\gamma_1$ in terms of powers of $(t-\theta)$ we see that $\gamma_1x_\ell\in\Xi$, and by hypothesis $\sigma g_1^{(1)}H_{(\ell-1)-1}x_0\in A[t]$. Thus to prove the desired result we reduce to prove that $g_1^{(1)}x_\ell\in A[t],$ which is valid by the induction hypothesis since
$\deg_tg_1^{(1)}=\deg_1g<\deg_tf.$  
\end{proof}

\begin{rmk}\label{in}
By the definition of $\Delta$ map, $\Delta(\Xi)\subseteq E'(A).$
\end{rmk}

Now, we can prove the Theorem \ref{integrl}.

\begin{proof}[proof of the Theorem \ref{integrl}](cf. \cite[Thm. 5.3.4]{2014arXiv1411.0124C})
\begin{enumerate}[(1)]
\item Given any point $(a_1,\dots,a_d)^{\tr}\in E'(\bar{k})$, its corresponding element in $M'/(\sigma-1)M'$ has a representative of the from $a_1\nu_1+\cdots+a_d\nu_d.$ We claim that the element \[t\left(\sum_{i=1}^da_i\nu_i\right)\] can be expressed as $\sum_{i=1}^db_i\nu_i\in\Xi$ for which each $b_i$ is of the form $b_i=\sum_j\sigma^jc_j$ so that $c_j$ is an $A$-linear combination of $q^{(\cdot)}$th powers of the $a_n$'s. Then via the map $\Delta$, the claim implies that the $t$-module $E'$ is defined over $A$.

We observe that if some \[\nu_i\notin\mathscr{S}:=\left\{(t-\theta)^{n-1}x_0,\dots,(t-\theta)x_{n-2},x_{n-1}\right\},\] then \[ta_i\nu_i=a_i(t-\theta)\nu_i+\theta a_i\nu_i=a_i\nu_{i-1}+\theta a_i\nu_i.\]
Therefore we reduce the claim to the case $\nu_i\in \mathscr{S}.$ To simplify the notation, we denote \[\nu_{i_1}:=x_{n-1},\dots,\nu_{i_n}:=(t-\theta)^{n-1}x_0.\] Now given any $1\leq \ell \leq n$ we consider
$ta_{i_\ell}\nu_{i_\ell}=a_{i_\ell}t(t-\theta)^{\ell-1}x_{n-\ell}$. Applying Proposition \ref{lprop} to $t(t-\theta)^{\ell-1}x_{n-\ell}$ we see that $ta_{i_\ell}\nu_{i_\ell}$ can be written as the form \[a_{i_\ell}\sum_{j=1}^d\left(\sum_{e_j}\sigma^{e_j}b_{e_j}\right)\nu_j=\sum_{j=1}^d\left(\sum_{e_j}\sigma^{e_j}a_{i_\ell}^{q^{e_j}}b_{e_j}\right)\nu_j\] for some $b_{e_j}\in A$, whence the desired result follows.
\item Note that 
\begin{align*}
\left[ \beta_0H^{(-1)}_{n-1}(t-\theta)^n-\gamma_0H^{(-1)}_{r-1}H^{(-1)}_{s-1}(t-\theta)^{n}\right]x_0+\sum_{i=1}^{n-1}\left(\alpha_iH^{(-1)}_{(n-i)-1}(t-\theta)^{n-i}\right)x_i\\
=\sigma\left(\beta_0H_{n-1}-\gamma_0H_{r-1}H_{s-1}\right)x_0+\sum_{i=1}^{n-1}\left(\sigma\alpha_iH_{(n-i)-1}x_i-\sigma\alpha_iH_{(n-i)-1}H_{i-1}x_0\right).
\end{align*}
Applying Proposition \ref{lprop} to the right hand side of the equation above we see that \[\left[ \beta_0H^{(-1)}_{n-1}(t-\theta)^n-\gamma_0H^{(-1)}_{r-1}H^{(-1)}_{s-1}(t-\theta)^{n}\right]x_0+\sum_{i=1}^{n-1}\left(\alpha_iH^{(-1)}_{(n-i)-1}(t-\theta)^{n-i}\right)x_i\in \Xi.\] Since $\mathbf{v}_{\mathfrak{C}}$ is its image via $\Delta$, the result follows from Remark \ref{in}.
\end{enumerate}
\end{proof}

\bibliographystyle{alpha}
\bibliography{reference}

\end{document}